\documentclass[11pt]{article}
\usepackage[T1]{fontenc}\usepackage[utf8]{inputenc}\usepackage{lmodern}
\usepackage{amsmath,amssymb,amsthm,mathtools,geometry,microtype,booktabs}
\usepackage[hidelinks]{hyperref}\usepackage{cleveref}
\newtheorem{theorem}{Theorem}[section]\newtheorem{proposition}[theorem]{Proposition}
\newtheorem{lemma}[theorem]{Lemma}\newtheorem{corollary}[theorem]{Corollary}
\theoremstyle{definition}\newtheorem{definition}[theorem]{Definition}\newtheorem{example}[theorem]{Example}
\theoremstyle{remark}\newtheorem{remark}[theorem]{Remark}
\newcommand{\Mbar}{\overline M}\newcommand{\Dperp}{\mathcal D^{\perp}}\newcommand{\Dtilde}{\widetilde{\mathcal D}}
\newcommand{\Tperp}{T^{\perp}M}
\title{Partially Totally Real Submanifolds of Sasakian Manifolds}
\author{%
Chul Woo Lee\thanks{Department of Mathematics, Kyungpook National University,
Daegu 41566, Republic of Korea. E-mail: \texttt{mathisu@knu.ac.kr}}
\and
Jae Won Lee\thanks{Department of Mathematics Education and RINS,
Gyeongsang National University, Jinju 52828, Republic of Korea.
E-mail: \texttt{leejaew@gnu.ac.kr}}}
\date{}
\usepackage{array}
\usepackage{tabularx}
\begin{document}\maketitle
\begin{abstract}
Partially totally real (PTR) submanifolds were introduced in K\"ahler
geometry by distinguishing a totally real distribution and leaving its
orthogonal complement unrestricted.  In this paper we develop the
corresponding framework for submanifolds of Sasakian manifolds tangent to
the Reeb vector field.  After separating the Reeb direction, we define the
totally real and ambiguous distributions and show that anti-invariant,
contact CR, hemi-slant and pointwise hemi-slant submanifolds occur as
special cases of the Sasakian PTR framework.  We establish the basic
tangential and normal decompositions, study maximality and integrability,
and derive the additional restrictions produced by the Reeb field.  We
then investigate the canonical morphisms \(P\) and \(F\), the geometry of
the associated distributions, and PTR-submanifolds in Sasakian space
forms.  Explicit models are included to illustrate the principal
structures and the differences from the K\"ahler case.
\end{abstract}
\noindent\textbf{Keywords.} Sasakian manifold; partially totally real submanifold; totally real distribution; ambiguous distribution; Reeb vector field; integrability.

\medskip
\noindent\textbf{2020 Mathematics Subject Classification.}
Primary 53C40; Secondary 53C25, 53D15.

\section{Introduction}
Poyraz, \c{S}ahin and Yerlikaya introduced partially totally real (PTR)
submanifolds of K\"ahler manifolds by distinguishing a totally real
differentiable distribution and leaving its orthogonal complement
unrestricted \cite{PoyrazSahinYerlikaya2025}.  Their construction contains
totally real, proper CR, hemi-slant and pointwise hemi-slant submanifolds
as special cases.  The K\"ahler theory then studies the geometry of the two
distributions, canonical morphisms, complex space forms, totally umbilical
PTR-submanifolds and PTR-products.

Almost semi-invariant submanifolds are an especially close predecessor.
Bejancu and Papaghiuc introduced almost semi-invariant submanifolds of a
Sasakian manifold \cite{BejancuPapaghiuc1984ASI}; Papaghiuc subsequently
studied their Sasakian space-form geometry and further structural
properties \cite{Papaghiuc1983SpaceForms,Papaghiuc1984Results}.  These
works decompose the contact tangent bundle according to the spectrum of
the tangential structural endomorphism.  The PTR description used here is
coarser: it remembers a chosen totally real summand and leaves its
orthogonal complement unresolved unless an additional hypothesis is
imposed.  Accordingly, we do not claim that PTR defines a spectral class
strictly larger than the almost semi-invariant or pointwise multi-slant
classes.

The purpose of the present paper is to develop the same PTR viewpoint in
Sasakian geometry.  Let
\((\Mbar^{2m+1},\phi,\xi,\eta,g)\) be a Sasakian manifold and let \(M\)
be a submanifold tangent to the Reeb vector field \(\xi\).  The basic
Sasakian identity
\begin{equation}\label{eq:reeb}
 \widetilde\nabla_X\xi=-\phi X
\end{equation}
shows that the Reeb direction cannot simply be absorbed into the
K\"ahler formulas.  Accordingly, we separate
\(\langle\xi\rangle\) from the contact part of \(TM\), retain a totally
real distribution there, and call its orthogonal complement the ambiguous
distribution, exactly in the spirit of the K\"ahler definition.

A first purpose of this construction is to place several familiar
Sasakian submanifold classes in a single framework.  After the Reeb line
is separated, anti-invariant, contact CR, hemi-slant and pointwise
hemi-slant submanifolds are obtained by imposing, respectively, totally
real, invariant, slant and pointwise slant behavior on the ambiguous
distribution.  Thus the inclusion relations play the same structural role
as Examples 1--4 in the K\"ahler PTR paper.  No spectral condition is
included in the PTR definition itself.

The subsequent development follows the original PTR program.  We first
establish the tangent and normal decompositions, the inclusion relations,
the comparison with the generic viewpoint, and the proper case.  We then
study maximality, integrability, the geometry of the corresponding leaves,
and PTR-product geometry.  Next we consider the canonical morphisms \(P\)
and \(F\).  Finally, we examine PTR-submanifolds in Sasakian space forms
and the restrictions forced by the Reeb direction.  Thus the organization
is inherited from the K\"ahler PTR program; only those steps that change
because of the Sasakian structure equations are modified.

\begin{table}[ht]
\centering
\small
\caption{The K\"ahler PTR program and its Sasakian counterpart.}
\label{tab:kahler-sasakian-program}
\begin{tabular}{@{}>{\raggedright\arraybackslash}p{0.24\textwidth}
                >{\raggedright\arraybackslash}p{0.30\textwidth}
                >{\raggedright\arraybackslash}p{0.36\textwidth}@{}}
\toprule
K\"ahler PTR stage \cite{PoyrazSahinYerlikaya2025}
& Sasakian counterpart
& What changes in the Sasakian setting\\
\midrule
Definition and basic special cases
& Definition~\ref{def:ptr}, Proposition~\ref{prop:inclusions},
Table~\ref{tab:ptr-inclusions}
& The Reeb line \(\langle\xi\rangle\) is separated; the inclusion architecture is retained.\\
\addlinespace
Integrability of the totally real distribution
& Proposition~\ref{prop:Dperpintegrable}
& The K\"ahler-style argument survives; maximality identifies the horizontal kernel of \(P\).\\
\addlinespace
Integrability of the ambiguous distribution
& Proposition~\ref{prop:ambcrit} and Corollary~\ref{cor:reeb-maximal}
& The additional Reeb component
\(g([X,Y],\xi)=2g(PX,Y)\) produces an obstruction absent in the K\"ahler case.\\
\addlinespace
Totally geodesic leaves and PTR-products
& Lemmas~\ref{lem:Dperp-geodesic}, \ref{lem:Dtilde-geodesic},
Theorem~\ref{thm:ptrproduct-shape}, and
Corollary~\ref{cor:no-proper-product-final}
& A direct proper PTR-product is ruled out because a totally geodesic ambiguous factor forces \(P|_{\Dtilde}=0\).\\
\addlinespace
Canonical morphisms \(P\) and \(F\)
& Lemmas~\ref{lem:parallelP}, \ref{lem:parallelF} and their consequences
& Reeb terms enter the covariant derivative formulas; in particular parallel \(P\) forces \(P=0\).\\
\addlinespace
Complex/Sasakian space-form stage
& Section~\ref{sec:spaceform}
& The mixed-curvature coefficient is shifted from the K\"ahler flat value to the Sasakian factor \(c-1\), with distinguished value \(c=1\).\\
\addlinespace
Totally umbilical PTR-submanifolds
& Proposition~\ref{prop:umbilical-geodesic} and Theorem~\ref{thm:no-umbilical-ptr}
& Since \(h(\xi,\xi)=0\), total umbilicity forces \(H=0\); a nonzero totally real PTR distribution is then impossible.\\
\bottomrule
\end{tabular}
\end{table}

Table~\ref{tab:kahler-sasakian-program} states the organizing principle of
the paper.  We follow the sequence of questions in the K\"ahler PTR
program and record, at each stage, whether the result survives, acquires a
Reeb correction, or collapses to rigidity or non-existence.

The paper is organized as follows. Section~2 introduces the PTR definition,
the inclusion and comparison architecture, the normal decomposition, and
the proper case. Sections~3--5 treat maximality, integrability, the
ambiguous distribution, explicit models, and the principal points where
the Sasakian program departs from the K\"ahler one. Sections~6--7 study
leaf geometry and the canonical morphisms \(P\) and \(F\). Section~8
treats Sasakian space forms, and Sections~9--10 give the Sasakian
counterparts of the totally umbilical and PTR-product stages.

The four principal mechanisms by which the Sasakian program departs from
the K\"ahler one are collected in
Subsection~\ref{subsec:program-divergence}.

\section{PTR-submanifolds of Sasakian manifolds and basic inclusions}
Let $(\Mbar^{2m+1},\phi,\xi,\eta,g)$ be a Sasakian manifold. We use
\begin{align}
 \phi^2X&=-X+\eta(X)\xi, & \eta(\xi)&=1, & \phi\xi&=0,\label{eq:sas1}\\
 g(\phi X,\phi Y)&=g(X,Y)-\eta(X)\eta(Y),\label{eq:sas2}\\
 (\widetilde\nabla_X\phi)Y&=g(X,Y)\xi-\eta(Y)X.\label{eq:sas3}
\end{align}

Throughout the paper we use
\[
\Phi(X,Y)=g(X,\phi Y),\qquad d\eta=2\Phi.
\]
We also use
\[
d\eta(X,Y)=X(\eta(Y))-Y(\eta(X))-\eta([X,Y]),
\]
with no additional factor \(1/2\).  All Reeb-bracket and Sasakian
space-form formulas below use this normalization.

Let $M$ be an immersed submanifold with $\xi\in\Gamma(TM)$. For $X\in TM$ and $V\in\Tperp$, write
\begin{equation}\label{eq:decomp}
 \phi X=PX+FX,\qquad \phi V=tV+fV,
\end{equation}
where $PX,tV$ are tangent and $FX,fV$ are normal.

\begin{definition}\label{def:ptr}
A submanifold $M$ of a Sasakian manifold, tangent to $\xi$, is called a \emph{partially totally real (PTR) submanifold} if there exists a nonzero constant-rank totally real distribution $\Dperp\subset TM\cap\ker\eta$, i.e.
\[
 \phi\Dperp\subset\Tperp.
\]
If $\Dtilde$ denotes the orthogonal complement of $\Dperp\oplus\langle\xi\rangle$ in $TM$, then
\begin{equation}\label{eq:ptrsplit}
 TM=\Dperp\oplus\Dtilde\oplus\langle\xi\rangle.
\end{equation}
Following the K\"ahler PTR terminology, $\Dtilde$ is called the
\emph{ambiguous distribution}.  The PTR structure refers to this chosen
decomposition; maximality of \(\Dperp\), when needed, is imposed later
and is not part of the definition.
\end{definition}

\begin{example}[A basic PTR model]\label{ex:def-ptr-immediate}
In the standard Sasakian space \(\mathbb R^5(-3)\), let
\[
E_i=2\partial_{y_i},\qquad
F_i=2(\partial_{x_i}+y_i\partial_z),\qquad
\xi=2\partial_z,
\]
with \(\phi E_i=F_i\) and \(\phi F_i=-E_i\).  The immersion
\[
\Psi(u,v,q)=(2u,2v,0,0,2q)
\]
has
\[
TM=\operatorname{span}\{F_1,F_2,\xi\}.
\]
Taking
\[
\Dperp=\operatorname{span}\{F_1,F_2\},\qquad \Dtilde=0,
\]
we have \(\phi F_i=-E_i\in T^\perp M\).  Hence
\(\phi\Dperp\subset T^\perp M\) and Definition~\ref{def:ptr} is satisfied.
\end{example}

\begin{lemma}\label{lem:Pinvariant}
For every PTR-submanifold,
\[
 P\Dperp=0,\qquad P\xi=0,\qquad P\Dtilde\subset\Dtilde.
\]
\end{lemma}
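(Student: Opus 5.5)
The three claims are pointwise and algebraic, so I will argue directly from the decomposition \eqref{eq:decomp}, the structure identities \eqref{eq:sas1}--\eqref{eq:sas2}, and the splitting \eqref{eq:ptrsplit}.

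\emph{The first two claims.} For \(X\in\Dperp\), Definition~\ref{def:ptr} gives \(\phi X\in\Tperp\). Comparing tangential parts in \(\phi X=PX+FX\) then yields \(PX=0\). For the Reeb field, \(\phi\xi=0\) by \eqref{eq:sas1}, so \(P\xi=0\) and \(F\xi=0\).

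\emph{The third claim.} Since \(TM\) splits orthogonally as in \eqref{eq:ptrsplit}, it suffices to show that \(PX\) is orthogonal to both \(\xi\) and \(\Dperp\) whenever \(X\in\Dtilde\). Two facts do the work.

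\begin{enumerate}
\item \(P\) is skew-symmetric with respect to \(g\) on \(TM\). For tangent \(X,Y\), the normal part \(FX\) pairs to zero with \(Y\), so \(g(PX,Y)=g(\phi X,Y)\). The Sasakian identities make \(\phi\) skew: putting \(\phi Y\) in \eqref{eq:sas2} and using \eqref{eq:sas1} gives \(g(\phi X,Y)=-g(X,\phi Y)\). Hence \(g(PX,Y)=-g(X,PY)\).
\item \(g(PX,\xi)=g(\phi X,\xi)=0\). This holds because \(\eta(\phi X)=0\), which follows from \eqref{eq:sas2} with \(Y=\xi\).
\end{enumerate}

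Now take \(X\in\Dtilde\) and \(Z\in\Dperp\). Skew-symmetry together with \(PZ=0\) gives \(g(PX,Z)=-g(X,PZ)=0\). Combined with \(PX\perp\xi\), this places \(PX\) in \(\Dtilde\).

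\emph{Main obstacle.} There is no real difficulty here. The only care needed is to justify the skew-symmetry of \(\phi\), equivalently \(\eta\circ\phi=0\), from the listed identities rather than assuming it. I will do this by polarizing \eqref{eq:sas2} with \(\phi Y\) in place of \(Y\) and reducing \(\phi^2Y\) via \eqref{eq:sas1}.
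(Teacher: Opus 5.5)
Your argument is the paper's argument. The first two identities come from total reality and $\phi\xi=0$. For $X\in\Dtilde$, $Z\in\Dperp$ the paper writes $g(PX,Z)=g(\phi X,Z)=-g(X,\phi Z)=0$ because $\phi Z$ is normal, which is your $-g(X,PZ)=0$ in different notation. Then $PX\perp\xi$ places $PX$ in $\Dtilde$.

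The one step that fails as written is the justification you yourself single out as needing care. Setting $Y=\xi$ in \eqref{eq:sas2} and using $\phi\xi=0$ gives only $0=g(X,\xi)-\eta(X)$, i.e.\ $\eta=g(\cdot,\xi)$; it says nothing about $\eta(\phi X)$. In fact \eqref{eq:sas2} together with $\phi\xi=0$ cannot imply $\eta\circ\phi=0$. On $\mathbb R^3$ with orthonormal $e_1,e_2,e_3=\xi$, the map $\phi e_1=e_3$, $\phi e_2=e_2$, $\phi e_3=0$ satisfies both, yet $\eta(\phi e_1)=1$.

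The missing input is the first identity in \eqref{eq:sas1}. Applying $\phi$ to $\phi^2X=-X+\eta(X)\xi$ gives $\phi^3X=-\phi X$. Applying \eqref{eq:sas1} to $\phi X$ gives $\phi^3X=-\phi X+\eta(\phi X)\xi$. Hence $\eta(\phi X)\xi=0$, so $\eta(\phi X)=0$ because $\eta(\xi)=1$. Combined with $\eta=g(\cdot,\xi)$, which is what \eqref{eq:sas2} with $Y=\xi$ actually yields, this gives $g(\phi X,\xi)=0$.

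Your polarization of \eqref{eq:sas2} with $\phi Y$ then does produce $g(\phi X,Y)=-g(X,\phi Y)$, since both correction terms $\eta(Y)\,g(\phi X,\xi)$ and $\eta(X)\,\eta(\phi Y)$ vanish. With this repair the proof is complete.
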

\begin{proof}
The first two identities follow from total reality and $\phi\xi=0$. If $X\in\Dtilde$ and $Z\in\Dperp$, then
\[
 g(PX,Z)=g(\phi X,Z)=-g(X,\phi Z)=0
\]
because $\phi Z$ is normal. Also $g(PX,\xi)=0$. Hence $PX$ is orthogonal to $\Dperp\oplus\langle\xi\rangle$ and therefore belongs to $\Dtilde$.
\end{proof}

The first rank observation from the K\"ahler theory has an exact Sasakian counterpart after the Reeb line has been separated.
\begin{proposition}\label{prop:rankone}
If $\operatorname{rank}\Dtilde=1$, then $\Dtilde$ is totally real. Consequently the whole contact distribution $TM\cap\ker\eta$ is totally real.
\end{proposition}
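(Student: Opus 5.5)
The plan is to use Lemma~\ref{lem:Pinvariant} together with the skew-symmetry of \(P\) on \(TM\). First I will record that \(P\) is skew-symmetric with respect to \(g\). For \(X,Y\in TM\),
\[
g(PX,Y)=g(\phi X,Y)=-g(X,\phi Y)=-g(X,PY),
\]
because the normal parts \(FX,FY\) are orthogonal to \(TM\).

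By Lemma~\ref{lem:Pinvariant}, \(P\) maps \(\Dtilde\) into itself. Suppose \(\operatorname{rank}\Dtilde=1\), and choose locally a unit section \(U\) spanning \(\Dtilde\). Then \(PU=\lambda U\) for some function \(\lambda\). Skew-symmetry gives
\[
\lambda=g(PU,U)=-g(U,PU)=-\lambda,
\]
so \(\lambda=0\) and \(PU=0\). Hence \(\phi U=FU\) is normal, that is, \(\phi\Dtilde\subset\Tperp\), and \(\Dtilde\) is totally real.

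For the second assertion, I will take an arbitrary \(X\in TM\cap\ker\eta\). By \eqref{eq:ptrsplit} and the orthogonality of the summands, \(X\) splits as \(X=Z+W\) with \(Z\in\Dperp\) and \(W\in\Dtilde\); there is no \(\xi\)-component because \(\eta(X)=0\). Then \(\phi X=\phi Z+\phi W\), and both terms are normal: the first by the definition of \(\Dperp\), the second by the step above. Hence \(\phi(TM\cap\ker\eta)\subset\Tperp\), which is exactly \(P|_{\ker\eta}=0\).

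There is no substantial obstacle. The only point needing care is that the normal components drop out in the skew-symmetry computation, and that the local choice of the unit section \(U\) is harmless since the conclusion is pointwise.
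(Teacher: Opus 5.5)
Your proof is correct and follows the paper's argument: Lemma~\ref{lem:Pinvariant} gives $PU=\lambda U$ on the rank-one $\Dtilde$, and skew-symmetry of $P$ forces $\lambda=0$. You also spell out the ``consequently'' step by splitting $TM\cap\ker\eta=\Dperp\oplus\Dtilde$, which the paper leaves implicit.
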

\begin{proof}
Let $X$ be a local unit field spanning $\Dtilde$. By \Cref{lem:Pinvariant}, $PX\in\Dtilde$, so $PX=aX$ for a function $a$. Since $P$ is skew-symmetric,
\[
 a=g(PX,X)=g(\phi X,X)=0.
\]
Thus $PX=0$ and $\phi X$ is normal.
\end{proof}

The following proposition is the Sasakian counterpart of the four basic
examples immediately following Definition~1 in the K\"ahler PTR paper
\cite{PoyrazSahinYerlikaya2025}.

\begin{proposition}[Basic inclusion relations]\label{prop:inclusions}
Let \(M\) be a submanifold of a Sasakian manifold tangent to \(\xi\).
After the Reeb line is separated, each of the following structures
determines a PTR structure:
\begin{enumerate}
\item an anti-invariant submanifold;
\item a contact CR-submanifold;
\item a hemi-slant submanifold;
\item a pointwise hemi-slant submanifold.
\end{enumerate}
More precisely, the additional condition imposed on the ambiguous
distribution is, respectively, totally real, invariant, slant, and
pointwise slant.
\end{proposition}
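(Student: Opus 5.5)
The plan is to go through the four classes one at a time. In each case I will take the standard definition, in which \(\xi\) is tangent and the contact part \(TM\cap\ker\eta\) is split into the pieces that define the class. I will then exhibit a distribution \(\Dperp\subset TM\cap\ker\eta\) of nonzero constant rank with \(\phi\Dperp\subset\Tperp\), and check that the complement \(\Dtilde\) of \(\Dperp\oplus\langle\xi\rangle\) is exactly the remaining piece. Since Definition~\ref{def:ptr} asks only for the existence of such a \(\Dperp\), each case reduces to two checks: that the chosen \(\Dperp\) is totally real and nonzero, and that the leftover summand is orthogonal to \(\Dperp\oplus\langle\xi\rangle\) and satisfies the stated condition.

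\emph{Anti-invariant case.} Here \(\phi(TM)\subset\Tperp\), so \(P=0\). Take \(\Dperp=TM\cap\ker\eta\), which is nonzero whenever \(\dim M\ge 2\), and \(\Dtilde=0\); the case \(\dim M=1\) is excluded because then there is no nonzero totally real distribution. Alternatively, \(\Dperp\) can be any nonzero constant-rank subbundle of \(TM\cap\ker\eta\), and then \(\Dtilde\) is automatically totally real. This gives the condition ``totally real''.

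\emph{Contact CR case.} Here \(TM=\mathcal D\oplus\mathcal D'\oplus\langle\xi\rangle\) with \(\phi\mathcal D=\mathcal D\) and \(\phi\mathcal D'\subset\Tperp\), and \(\mathcal D'\ne0\) by properness. Take \(\Dperp=\mathcal D'\) and \(\Dtilde=\mathcal D\), which is invariant.

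\emph{Hemi-slant and pointwise hemi-slant cases.} Here \(TM=\mathcal D^\theta\oplus\mathcal D'\oplus\langle\xi\rangle\), where \(\mathcal D'\) is anti-invariant and \(\mathcal D^\theta\) is slant with constant angle \(\theta\) (respectively, pointwise slant with a slant function \(\theta\)). Take \(\Dperp=\mathcal D'\) and \(\Dtilde=\mathcal D^\theta\). The slant and pointwise-slant conditions are preserved because \(\Dtilde\) is exactly the original slant summand.

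The one genuinely non-formal point is orthogonality. In the definitions of these classes the summands are orthogonal and \(\xi\) is orthogonal to them, so the complement of \(\Dperp\oplus\langle\xi\rangle\) is literally the other summand. If a source only assumes a direct sum, I will argue as in Lemma~\ref{lem:Pinvariant}. For \(X\) in the slant or invariant part and \(Z\in\Dperp\), skew-symmetry of \(\phi\) gives \(g(PX,Z)=-g(X,\phi Z)=0\). Because \(P\) restricted to the slant part is injective with image in that part, this yields orthogonality. Finally, \(\eta\) vanishes on both summands by definition.

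I expect the main obstacle to be bookkeeping rather than substance: making sure \(\Dperp\) is nonzero with constant rank in each case. This is where properness or nontriviality of the anti-invariant summand is used, and where the degenerate anti-invariant case with \(\dim M=1\) is excluded.
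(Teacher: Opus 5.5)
Your proposal is correct and follows the paper's proof: in each case you make the same choices ($\Dperp=TM\cap\ker\eta$ with $\Dtilde=0$, and otherwise $\Dperp$ equal to the anti-invariant summand with $\Dtilde$ equal to the invariant, slant, or pointwise slant summand). Your explicit check that $\Dperp$ is nonzero, excluding $\dim M=1$ and the case of a vanishing anti-invariant summand, covers a point the paper's proof passes over silently.

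The only soft spot is your fallback for a merely direct (non-orthogonal) sum. For the slant summand, the property $PD_\theta\subset D_\theta$ is exactly what orthogonality would supply, and injectivity of $P|_{D_\theta}$ needs $\theta\neq\pi/2$, so that argument is circular unless these conditions are built into the definition. The fallback is not needed, however, because the standard definitions of contact CR, hemi-slant and pointwise hemi-slant submanifolds already require orthogonal summands.
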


\begin{proof}
For an anti-invariant submanifold take
\(\Dperp=TM\cap\ker\eta\) and \(\Dtilde=0\); equivalently, one may split
off a totally real summand as the ambiguous part.  For a contact
CR-submanifold write
\[
 TM=D^\perp\oplus D\oplus\langle\xi\rangle,
 \qquad \phi D^\perp\subset T^\perp M,\qquad \phi D=D,
\]
and choose \(\Dperp=D^\perp\), \(\Dtilde=D\).  For a hemi-slant
submanifold write
\[
 TM=D^\perp\oplus D_\theta\oplus\langle\xi\rangle
\]
with \(D^\perp\) anti-invariant and \(D_\theta\) slant, and choose
\(\Dperp=D^\perp\), \(\Dtilde=D_\theta\).  The pointwise hemi-slant case
is identical with the constant angle replaced by a pointwise slant
function.  In every case Definition~\ref{def:ptr} is satisfied.
\end{proof}

\begin{table}[ht]
\centering
\small
\caption{Basic special cases of the Sasakian PTR framework.}
\label{tab:ptr-inclusions}
\begin{tabular}{@{}llll@{}}
\toprule
Structure & chosen \(\Dperp\) & ambiguous \(\Dtilde\) & condition on \(\Dtilde\)\\
\midrule
Anti-invariant & contact tangent part & \(0\) (or a TR summand) & totally real\\
Contact CR & anti-invariant part & invariant part & \(\phi\)-invariant\\
Hemi-slant & anti-invariant part & slant part & slant\\
Pointwise hemi-slant & anti-invariant part & pointwise slant part & pointwise slant\\
\bottomrule
\end{tabular}
\end{table}

The four rows are realized explicitly in
Examples~\ref{ex:totallyreal}--\ref{ex:pointwise}; the basic coordinate
model of Example~\ref{ex:def-ptr-immediate} realizes the anti-invariant
endpoint, while Example~\ref{ex:contactCR-coordinate} gives a coordinate
contact CR realization.

\begin{table}[htbp]
\centering
\small
\caption{Position of PTR geometry among standard submanifold classes. The table records defining structure rather than claiming inclusion when none is justified.}
\label{tab:comparison}
\begin{tabularx}{\textwidth}{@{}>{\raggedright\arraybackslash}p{2.45cm}>{\raggedright\arraybackslash}X>{\raggedright\arraybackslash}X>{\raggedright\arraybackslash}X@{}}
\toprule
Class & Distinguished part & Complementary part & Relation with PTR \\
\midrule
Totally real & $\phi(TM\cap\ker\eta)\subset T^\perp M$ & none required & PTR special case; one may take $\Dtilde=0$. \\
Contact CR & anti-invariant $D^\perp$ & $\phi$-invariant $D$ & PTR special case with $\Dperp=D^\perp$, $\Dtilde=D$. \\
Hemi-slant & anti-invariant $D^\perp$ & slant $D_\theta$ & PTR special case with a constant slant condition on $\Dtilde$. \\
Pointwise hemi-slant & anti-invariant $D^\perp$ & pointwise slant $D_{\theta(p)}$ & PTR special case with a pointwise slant condition on $\Dtilde$. \\

Almost semi-invariant & spectral eigendistributions of the tangential structural morphism & orthogonal sum of invariant, anti-invariant and intermediate spectral blocks & A finer spectral description; see Remark~\ref{rem:spectral-refinement} for the regular-set refinement of a maximal PTR structure. \\
Semi-slant & invariant distribution & slant distribution & Not a PTR special case in general; the distinguished structure is different. \\
Slant / pointwise slant & the contact tangent part itself is (pointwise) slant & no distinguished totally real summand is required & Generally independent of PTR; it becomes relevant when imposed on $\Dtilde$. \\
Generic & invariant distribution & arbitrary real complement & Conceptually complementary to PTR: generic geometry singles out an invariant part, whereas PTR singles out a totally real part. \\
Partially slant (PS) & slant distribution & ambiguous complementary distribution & A different partial-structure construction; PS and PTR overlap in special subclasses but neither is identified with the other. \\
PTR & totally real $\Dperp$ & arbitrary $P$-invariant ambiguous $\Dtilde$ in the contact part & Basic class studied here. \\
Proper PTR structure & chosen totally real $\Dperp$ & the chosen $\Dtilde$ is neither totally real, invariant, slant nor pointwise slant & A property of the selected PTR decomposition, not a claim that the underlying submanifold lies outside all finer spectral classes. \\
\bottomrule
\end{tabularx}
\end{table}

Concrete realizations of the first four rows are those already listed
after Table~\ref{tab:ptr-inclusions}; in particular,
Example~\ref{ex:contactCR-coordinate} gives the explicit coordinate
contact CR model.

Table~\ref{tab:comparison} is read in the same direction as the K\"ahler
PTR construction: generic and partially slant frameworks distinguish
different structural data and are comparison classes rather than
automatic PTR subclasses
\cite{PoyrazSahinYerlikaya2025,YerlikayaPoyrazSahin2025}.

\begin{example}[A contact CR realization]\label{ex:contactCR-coordinate}
In the standard Sasakian space \(\mathbb R^{7}(-3)\), define
\[
\Psi_{\rm CR}(u,v,w,q)
=
(2u,0,2w,\,2v,0,0,\,4uv+2q),
\]
where the coordinates are ordered as
\((x_1,x_2,x_3,y_1,y_2,y_3,z)\).  Its tangent bundle is spanned by
\[
F_1,\quad E_1,\quad F_3,\quad \xi .
\]
Thus
\[
D=\operatorname{span}\{F_1,E_1\},\qquad
\Dperp=\operatorname{span}\{F_3\},
\]
with \(\phi D=D\) and \(\phi F_3=-E_3\in T^\perp M\).  Hence this is a
contact CR-submanifold and simultaneously a PTR structure with
\(\Dtilde=D\).  This realizes concretely the contact CR row of Table~1.

Indeed,
\[
\Psi_u=F_1,\qquad
\Psi_v=E_1+2u\xi,\qquad
\Psi_w=F_3,\qquad
\Psi_q=\xi,
\]
so the stated tangent frame follows immediately.
\end{example}

Table~\ref{tab:ptr-inclusions} gives the overview, while
Proposition~\ref{prop:inclusions} is the formal inclusion statement.
Thus the table records the architecture inherited from the K\"ahler PTR
construction and the proposition verifies it in the Sasakian setting.

The arrows in Table~\ref{tab:ptr-inclusions} are inclusion statements:
one obtains the PTR structure by forgetting the extra condition on
\(\Dtilde\).  They do not assert that the corresponding classical
categories are mutually comparable.

For context, the surrounding Sasakian classes have a substantial earlier
literature: semi-invariant and generic submanifolds were studied in
\cite{BejancuPapaghiuc1981,YanoKon1980}, semi-slant and slant
submanifolds in \cite{Cabrerizo1999,Cabrerizo2000}, and pointwise slant
geometry in \cite{Park2020}.  These references locate the special cases
used above; they do not change the K\"ahler-PTR program that organizes the
present paper.

The comparison with the generic construction is also useful.  In the
K\"ahler PTR paper this distinction is stated immediately after the normal
bundle decomposition: generic submanifolds are organized around an
invariant distribution with an unrestricted complement, whereas PTR
submanifolds are organized around a totally real distribution with an
unrestricted complement \cite{PoyrazSahinYerlikaya2025}.  After separating
the Reeb line, the same structural contrast is represented in the
Sasakian setting by Table~\ref{tab:generic-ptr}.

\begin{table}[ht]
\centering
\small
\caption{Structural comparison of the generic and PTR viewpoints after
separating the Reeb direction.}
\label{tab:generic-ptr}
\begin{tabular}{@{}>{\raggedright\arraybackslash}p{0.21\textwidth}>{\raggedright\arraybackslash}p{0.34\textwidth}>{\raggedright\arraybackslash}p{0.34\textwidth}@{}}
\toprule
 & Generic viewpoint & PTR viewpoint\\
\midrule
Distinguished part
& invariant/contact-invariant distribution
& totally real distribution \(\Dperp\)\\
Complement
& unrestricted real distribution
& unrestricted ambiguous distribution \(\Dtilde\)\\
Typical included cases
& invariant/contact-invariant, contact CR, semi-slant and pointwise semi-slant types
& anti-invariant, contact CR, hemi-slant and pointwise hemi-slant types\\
Defining emphasis
& invariance of the distinguished distribution
& total reality of the distinguished distribution\\
\bottomrule
\end{tabular}
\end{table}

Table~\ref{tab:generic-ptr} is a structural comparison, not a claim that
the two collections are disjoint.  In particular, contact CR geometry
naturally appears in both viewpoints.  The role of the table is the same
as that of Remark~1 in the K\"ahler PTR paper: it explains why PTR is
organized from the totally real side rather than from the invariant side.

As in the K\"ahler case, define
\begin{equation}\label{eq:normal}
 \Tperp=\phi\Dperp\oplus F\Dtilde\oplus\nu
\end{equation}
orthogonally, where $\nu$ is the orthogonal complement of the first two summands.
\begin{lemma}\label{lem:nu}
The normal subbundle $\nu$ is $\phi$-invariant: $\phi\nu=\nu$.
\end{lemma}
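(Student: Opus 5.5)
Fix $N\in\Gamma(\nu)$. The plan is to show that $\phi N$ is normal and orthogonal to both $\phi\Dperp$ and $F\Dtilde$. Then $\phi N\in\nu$, which gives $\phi\nu\subset\nu$. For equality, note that $\nu$ is normal and hence orthogonal to $\xi$, so \eqref{eq:sas1} gives $\phi^2N=-N$. Thus $\phi|_\nu$ is injective, and $N=\phi(-\phi N)$ shows $\phi\nu=\nu$.

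\emph{Step 1: $\phi N$ is normal.} Take $X\in TM$. By skew-symmetry of $\phi$ (from \eqref{eq:sas2}),
\[
g(\phi N,X)=-g(N,\phi X)=-g(N,FX).
\]
Decompose $X=Z+Y+a\xi$ according to \eqref{eq:ptrsplit}, with $Z\in\Dperp$ and $Y\in\Dtilde$. Then
\[
FX=\phi Z+FY,
\]
because $FZ=\phi Z$ by total reality and $F\xi=0$. The first term lies in $\phi\Dperp$ and the second in $F\Dtilde$. Both are orthogonal to $\nu$ by \eqref{eq:normal}, so $g(\phi N,X)=0$ and $\phi N$ is normal. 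In the notation of \eqref{eq:decomp}, this says $tN=0$ for $N\in\nu$.

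\emph{Step 2: orthogonality to $\phi\Dperp$.} For $Z\in\Dperp$,
\[
g(\phi N,\phi Z)=g(N,Z)-\eta(N)\eta(Z)=0,
\]
since $N$ is normal and $Z$ is tangent.

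\emph{Step 3: orthogonality to $F\Dtilde$.} This is the only slightly delicate step. For $Y\in\Dtilde$ we have $FY=\phi Y-PY$, so
\[
g(\phi N,FY)=g(\phi N,\phi Y)-g(\phi N,PY).
\]
The first term equals $g(N,Y)-\eta(N)\eta(Y)=0$. The second term vanishes because $PY$ is tangent (indeed in $\Dtilde$ by Lemma~\ref{lem:Pinvariant}) and $\phi N$ is normal by Step 1. So Step 3 depends on Step 1 having been established first.

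Combining the three steps, $\phi N\perp \phi\Dperp\oplus F\Dtilde$ inside $\Tperp$, hence $\phi N\in\nu$. The injectivity argument from the first paragraph then completes the equality $\phi\nu=\nu$.
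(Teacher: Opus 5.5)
Your proof is correct and follows essentially the same route as the paper. Both show that $\phi N$ has no tangential component, check orthogonality to $\phi\Dperp$ and to $F\Dtilde$ (the latter using that tangential vanishing, exactly as you flag), and obtain equality from $\phi^2=-I$ on the normal bundle. Your decomposition $X=Z+Y+a\xi$ also makes explicit the $\xi$-component check, which the paper leaves implicit.
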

\begin{proof}
Let $V\in\nu$. Since every normal vector is orthogonal to $\xi$, $\|\phi V\|=\|V\|$. For $Z\in\Dperp$, skew-symmetry gives
\[
 g(\phi V,Z)=-g(V,\phi Z)=0,
 \qquad g(\phi V,\phi Z)=g(V,Z)=0.
\]
For $X\in\Dtilde$,
\[
 g(\phi V,X)=-g(V,\phi X)=-g(V,FX)=0,
\]
and
\[
 g(\phi V,FX)=g(\phi V,\phi X-PX)=g(V,X)-g(\phi V,PX)=0.
\]
Thus $\phi V$ has neither tangent component nor components in $\phi\Dperp$ or $F\Dtilde$. Hence $\phi V\in\nu$. Since $\phi^2=-I$ on the normal bundle, equality follows.
\end{proof}

\begin{definition}\label{def:proper}
Fix a PTR decomposition \eqref{eq:ptrsplit}.  On an open region where the
geometric type of \(\Dtilde\) is constant, the PTR structure is called
\emph{proper} if \(\Dtilde\) is neither totally real nor
\(\phi\)-invariant and is neither slant nor pointwise slant.
Thus properness is understood with respect to the chosen PTR
decomposition; maximality is not assumed.
\end{definition}

Proposition~\ref{prop:rankone} shows in particular that a proper PTR structure must have \(\operatorname{rank}\Dtilde\ge2\).

\begin{remark}[Dependence on the chosen totally real distribution]
\label{rem:choice-dependence}
Properness is not asserted to be an invariant of the immersed submanifold
\(M\) alone.  For example, if a contact CR-submanifold has an
anti-invariant distribution \(D^\perp\) of rank at least two, choosing a
proper nonzero subdistribution
\(\Dperp\subsetneq D^\perp\) moves the orthogonal complement
\(D^\perp\ominus\Dperp\) into the ambiguous distribution together with
the invariant part.  The resulting chosen \(\Dtilde\) can therefore have
both zero and \(-1\) eigenvalues of \(P^2\), so it need not be totally
real, invariant, slant, or pointwise slant as a whole.  Thus the phrase
``proper PTR'' means a proper PTR \emph{structure} \((M,\Dperp)\), not a
new intrinsic class of the underlying submanifold.  Maximality removes
this particular freedom when it is required later.
\end{remark}

\begin{remark}[Spectral refinement on regular sets]
\label{rem:spectral-refinement}
Since \(P\) is skew-adjoint on the contact tangent bundle,
\(-P^2|_{\Dtilde}\) is self-adjoint and nonnegative.  On every open set
where its distinct eigenvalues and their multiplicities are locally
constant, the corresponding eigenspaces form smooth \(P\)-invariant
subbundles.  Every positive eigenspace is pointwise slant, while the zero
eigenspace is totally real.  Hence a maximal PTR structure generally
admits the familiar almost semi-invariant or pointwise multi-slant
refinement on such a regular set
\cite{BejancuPapaghiuc1984ASI,Papaghiuc1984Results,PrasadVerma2021}.
The PTR formalism used here deliberately does not choose that finer
spectral splitting.
\end{remark}

The examples below deliberately parallel the K\"ahler PTR paper, but the Reeb direction is displayed explicitly.

\begin{example}[Rank-one ambiguous distribution]\label{ex:rankone}
Take a PTR patch for which $\Dtilde=\operatorname{span}\{X\}$ is one-dimensional. Since $PX\in\Dtilde$, write $PX=aX$. Then $a\|X\|^2=g(PX,X)=0$, so $PX=0$ and $\phi X=FX$ is normal. Thus the ambiguous line is totally real, illustrating Proposition~\ref{prop:rankone} directly.
\end{example}

\begin{example}[Anti-invariant case]\label{ex:totallyreal}
This realizes the first row of Table~\ref{tab:ptr-inclusions}.  If $M$ is anti-invariant and tangent to $\xi$, so that $\phi(TM\cap\ker\eta)\subset T^\perp M$, choose $\Dperp=TM\cap\ker\eta$ and $\Dtilde=0$. Then $M$ is PTR. This is the Sasakian counterpart of Example 1 in the K\"ahler PTR paper.
\end{example}

\begin{example}[Contact CR case]\label{ex:cr}
This realizes the second row of Table~\ref{tab:ptr-inclusions}.  For a contact CR-submanifold
\[
 TM=D\oplus D^\perp\oplus\langle\xi\rangle,
 \qquad \phi D=D,\qquad \phi D^\perp\subset T^\perp M,
\]
put $\Dperp=D^\perp$ and $\Dtilde=D$. Then $P^2=-I$ on $\Dtilde$. Thus the contact CR case is obtained from PTR geometry by imposing invariance on the ambiguous distribution.
\end{example}

\begin{example}[Hemi-slant case]\label{ex:hemislant}
This realizes the third row of Table~\ref{tab:ptr-inclusions}.  If $TM=D^\perp\oplus D_\theta\oplus\langle\xi\rangle$ with $D^\perp$ anti-invariant and $D_\theta$ slant, choose $\Dperp=D^\perp$ and $\Dtilde=D_\theta$. Then
\[
 P^2X=-\cos^2\theta\,X,\qquad X\in D_\theta.
\]
Hence the hemi-slant structure is a PTR structure with one constant spectral value on the ambiguous distribution; no such spectral restriction belongs to the PTR definition itself.
\end{example}

\begin{example}[Pointwise hemi-slant case]\label{ex:pointwise}
This realizes the fourth row of Table~\ref{tab:ptr-inclusions}.  If the slant angle is a smooth function $\theta(p)$, the same choice gives
$P^2X=-\cos^2\theta(p)X$ on the ambiguous distribution. This is the pointwise hemi-slant special case of PTR geometry.
\end{example}

\begin{example}[Normal complement]\label{ex:normal}
In the hemi-slant case, let $X,Y$ be a local orthonormal slant pair with $PX=\cos\theta Y$ and $PY=-\cos\theta X$. Then $FX,FY$ are normal and have length $\sin\theta$. Together with $\phi\Dperp$ they generate the first two normal summands in \eqref{eq:normal}; every remaining normal vector lies in $\nu$, and Lemma~\ref{lem:nu} shows that this remaining bundle is $\phi$-invariant.
\end{example}

\begin{example}[Proper PTR]\label{ex:proper}
The explicit immersion in Section~\ref{sec:explicit} has
$\Dtilde=D_\alpha\oplus D_\beta$ with
\[
 P^2|_{D_\alpha}=-\cos^2\alpha I,\qquad
 P^2|_{D_\beta}=-\cos^2\beta I,
 \qquad 0<\alpha\ne\beta<\frac\pi2.
\]
Thus $\Dtilde$ is neither totally real nor invariant, and the two distinct eigenvalues at the same point prevent it from being slant or pointwise slant as a whole. Hence the example is proper.
\end{example}

\section{Maximality and integrability}
The K\"ahler PTR paper next studies the geometry of its two distributions. In the Sasakian setting it is useful first to distinguish the maximal anti-invariant case.
\begin{definition}\label{def:maximal}
A PTR-submanifold is said to have a \emph{maximal totally real distribution} if
\begin{equation}\label{eq:maximal}
 \Dperp=\ker(P|_{TM\cap\ker\eta}).
\end{equation}
\end{definition}

\begin{proposition}\label{prop:injective}
If \(\Dperp\) is maximal in the sense of Definition~\ref{def:maximal}, then \(P|_{\Dtilde}\) is injective. In particular $\dim\Dtilde$ is even.
\end{proposition}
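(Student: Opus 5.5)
The plan is to read injectivity off the definition of maximality and then obtain even dimension from the skew-symmetry of \(P\) restricted to \(\Dtilde\).

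\emph{Injectivity.} Let \(X\in\Dtilde\) with \(PX=0\).

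\begin{itemize}
\item Since \(\Dtilde\subset TM\cap\ker\eta\), the vector \(X\) lies in \(\ker(P|_{TM\cap\ker\eta})\).
\item By maximality \eqref{eq:maximal}, this kernel is \(\Dperp\), so \(X\in\Dperp\).
\item Because \(\Dtilde\) is orthogonal to \(\Dperp\), we get \(X\in\Dperp\cap\Dtilde=0\).
\end{itemize}

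Hence \(P|_{\Dtilde}\) is injective, pointwise at every point.

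\emph{Even dimension.} By \Cref{lem:Pinvariant}, \(P\Dtilde\subset\Dtilde\), so \(P|_{\Dtilde}\) is an endomorphism of each fibre \(\Dtilde_p\). This endomorphism is skew-adjoint: for \(X,Y\) tangent,
\[
g(PX,Y)=g(\phi X,Y)=-g(X,\phi Y)=-g(X,PY).
\]
Let \(k=\dim\Dtilde_p\) and let \(A\) be the matrix of \(P|_{\Dtilde_p}\) in an orthonormal basis. Then \(A^{T}=-A\), so
\[
\det A=\det A^{T}=(-1)^k\det A.
\]
Injectivity gives \(\det A\neq 0\), which forces \((-1)^k=1\), i.e. \(k\) is even. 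Alternatively, the eigenvalues of \(A\) are purely imaginary and come in conjugate pairs, and \(0\) is excluded. Since \(\Dtilde\) has constant rank (it is the complement of the constant-rank bundle \(\Dperp\oplus\langle\xi\rangle\)), \(\dim\Dtilde\) is even.

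There is essentially no obstacle. The only care needed is to note that \(\Dtilde\subset\ker\eta\), so that the kernel in \eqref{eq:maximal} is the relevant one, and that \(\Dtilde=0\) is trivially even.
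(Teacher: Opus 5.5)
Your proposal is correct and follows the paper's proof. Injectivity comes from maximality, which places a $P$-null vector of $\Dtilde$ in $\Dperp$, and orthogonality then forces it to vanish; evenness comes from $P|_{\Dtilde}$ being a nonsingular skew-symmetric endomorphism. You also spell out details the paper leaves implicit, namely $P\Dtilde\subset\Dtilde$ via Lemma~\ref{lem:Pinvariant} and the determinant identity $\det A=(-1)^k\det A$.
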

\begin{proof}
If $X\in\Dtilde$ and $PX=0$, then $X\perp\xi$ and \eqref{eq:maximal} gives $X\in\Dperp$. Orthogonality in \eqref{eq:ptrsplit} yields $X=0$. Since $P|_{\Dtilde}$ is a nonsingular skew-symmetric endomorphism, $\dim\Dtilde$ is even.
\end{proof}

A point requiring care is the integrability of $\Dperp$. In the K\"ahler paper the totally real distribution is asserted to be integrable. In the present Sasakian adaptation we separate what follows formally from what requires maximality. For $Z,W\in\Dperp$, one has
\begin{equation}\label{eq:bracketP}
 P[Z,W]=0.
\end{equation}
Indeed the Sasakian fundamental two-form $\Phi(X,Y)=g(X,\phi Y)$ satisfies $d\eta=2\Phi$ under the present normalization and hence $d\Phi=0$; the same cyclic calculation as in the K\"ahler proof gives \eqref{eq:bracketP}. Moreover
\[
 g([Z,W],\xi)=2g(PZ,W)=0.
\]
Thus $[Z,W]$ lies in the horizontal zero space of $P$. If $\Dperp$ is maximal, that zero space is exactly $\Dperp$.

\begin{proposition}\label{prop:Dperpintegrable}
If a PTR-submanifold has maximal totally real distribution, then $\Dperp$ is integrable.
\end{proposition}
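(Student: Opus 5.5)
The plan is to show that for $Z,W\in\Gamma(\Dperp)$ the bracket $[Z,W]$ has no component along $\xi$ and is annihilated by $P$. Maximality then places it in $\Dperp$, and Frobenius gives integrability. The tangent bundle splits as in \eqref{eq:ptrsplit}, so it suffices to check two things: $g([Z,W],\xi)=0$, and $P[Z,W]=0$. Once $[Z,W]\in\ker\eta$, the second condition puts $[Z,W]$ in $\ker(P|_{TM\cap\ker\eta})$, which equals $\Dperp$ by \eqref{eq:maximal}.

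\textbf{Reeb component.} Both $Z$ and $W$ lie in $\ker\eta$, so $\eta(Z)=\eta(W)=0$. The normalization $d\eta(X,Y)=X\eta(Y)-Y\eta(X)-\eta([X,Y])$ together with $d\eta=2\Phi$ gives
\[
-\eta([Z,W])=2\Phi(Z,W)=2g(Z,\phi W)=2g(Z,PW).
\]
By Lemma~\ref{lem:Pinvariant} we have $PW=0$, so $\eta([Z,W])=0$. To avoid sign-convention worries, I would double-check this by a direct computation using \eqref{eq:reeb}:
\[
g(\widetilde\nabla_Z W,\xi)=-g(W,\widetilde\nabla_Z\xi)=g(W,\phi Z)=0,
\]
since $\phi Z$ is normal. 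The same holds with $Z$ and $W$ swapped, and torsion-freeness then gives $g([Z,W],\xi)=0$.

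\textbf{$P$-component.} For $P[Z,W]=0$, I would avoid the cyclic $d\Phi$ argument and compute directly with \eqref{eq:sas3}. Take any tangent $X$. Since $\phi W$ is normal, $g(\phi W,X)=0$, and differentiating along $Z$ gives
\[
0=g(\widetilde\nabla_Z(\phi W),X)+g(\phi W,\widetilde\nabla_Z X).
\]
Expand the first term as $(\widetilde\nabla_Z\phi)W+\phi\widetilde\nabla_Z W$. By \eqref{eq:sas3}, $(\widetilde\nabla_Z\phi)W=g(Z,W)\xi-\eta(W)Z=g(Z,W)\xi$, which contributes $g(Z,W)\eta(X)$. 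Subtract the analogous identity with $Z$ and $W$ exchanged. The $\xi$-terms cancel by the symmetry of $g$, and we are left with
\[
g(\phi[Z,W],X)=-g(\phi W,\widetilde\nabla_Z X)+g(\phi Z,\widetilde\nabla_W X).
\]
The left side is $g(P[Z,W],X)$. On the right only the normal parts $h(Z,X)$ and $h(W,X)$ survive, because $\phi W$ and $\phi Z$ are normal. This leaves
\[
g(P[Z,W],X)=g(h(W,X),\phi Z)-g(h(Z,X),\phi W).
\]
So the crux is to show that $g(h(W,X),\phi Z)$ is symmetric in $(Z,W)$ for every tangent $X$. That is the standard identity $A_{\phi Z}W=A_{\phi W}Z$ for totally real directions.

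To prove that symmetry, I would compute $g(h(X,W),\phi Z)=g(\widetilde\nabla_X W,\phi Z)=-g(\phi\widetilde\nabla_X W,Z)$. Writing $\phi\widetilde\nabla_X W=\widetilde\nabla_X(\phi W)-(\widetilde\nabla_X\phi)W$ gives
\[
g(h(X,W),\phi Z)=g(A_{\phi W}X,Z)+g(X,W)\eta(Z)-\eta(W)g(X,Z)=g(h(X,Z),\phi W),
\]
because $\eta(Z)=\eta(W)=0$. Hence the right side above vanishes and $P[Z,W]=0$.

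I expect this step to be the main obstacle, since it is where the Sasakian correction terms of \eqref{eq:sas3} must be seen to drop out. They do drop out, because $\Dperp\subset\ker\eta$. Combining the two steps with maximality gives $[Z,W]\in\Dperp$.
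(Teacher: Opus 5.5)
Your proof is correct. Its outer structure matches the paper's: show $g([Z,W],\xi)=0$ and $P[Z,W]=0$, then invoke maximality \eqref{eq:maximal}. Your Reeb step is the paper's own $g([Z,W],\xi)=2g(PZ,W)=0$.

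The difference lies in the key step $P[Z,W]=0$. The paper obtains it from closedness of the fundamental form: $d\eta=2\Phi$ gives $d\Phi=0$. In the cyclic expansion of $d\Phi(X,Z,W)$, with $X$ tangent and $Z,W\in\Dperp$, every term vanishes because $\phi\Dperp$ is normal, except the one containing $\Phi([Z,W],X)=-g(P[Z,W],X)$. That argument has three features:
\begin{itemize}
\item it is purely exterior-algebraic and never touches the second fundamental form;
\item it is a verbatim transcription of the K\"ahler proof;
\item it uses only $d\Phi=0$, so it works over any contact metric ambient, not just a Sasakian one.
\end{itemize}

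Your route instead differentiates $g(\phi W,X)=0$, applies the Sasakian identity \eqref{eq:sas3} and the Gauss--Weingarten formulas, and reduces the claim to the symmetry $A_{\phi Z}W=A_{\phi W}Z$ for $Z,W\in\Dperp$. It is longer and relies on the full Sasakian normality condition. In return, it shows explicitly why the correction terms $g(X,Y)\xi-\eta(Y)X$ drop out, namely because $\Dperp\subset\ker\eta$. It also yields that shape-operator symmetry as a by-product, a standard identity for totally real directions that is useful in its own right.
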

\begin{proof}
The preceding calculation gives $P[Z,W]=0$ and $[Z,W]\perp\xi$ for all $Z,W\in\Dperp$. Maximality then implies $[Z,W]\in\Dperp$.
\end{proof}

This formulation is deliberately more cautious than simply copying the K\"ahler statement: without maximality, $P[Z,W]=0$ only places the bracket in the full horizontal kernel of $P$ and does not by itself force it back into the chosen $\Dperp$.

For the ambiguous distribution the Reeb field creates a new obstruction. From \eqref{eq:reeb} and the Gauss formula,
\begin{equation}\label{eq:reebparts}
 \nabla_X\xi=-PX,\qquad h(X,\xi)=-FX.
\end{equation}
Consequently, for $X,Y\perp\xi$,
\begin{equation}\label{eq:reebbracket}
 g([X,Y],\xi)=2g(PX,Y).
\end{equation}

\begin{proposition}[Contact-isotropic obstruction]
\label{prop:contact-isotropic}
For any PTR structure, if the ambiguous distribution \(\Dtilde\) is
integrable, then
\[
 P|_{\Dtilde}=0.
\]
\end{proposition}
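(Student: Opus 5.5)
The plan is to apply the Reeb bracket identity \eqref{eq:reebbracket} to pairs of sections of \(\Dtilde\) and then use \Cref{lem:Pinvariant} to see that \(P\) maps \(\Dtilde\) into itself. No other input is needed; the argument is pointwise linear algebra once the bracket identity is available.

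First, recall how \eqref{eq:reebbracket} arises. For \(X,Y\perp\xi\) tangent to \(M\), \(\eta(X)=\eta(Y)=0\). The normalization \(d\eta(X,Y)=X\eta(Y)-Y\eta(X)-\eta([X,Y])\) together with \(d\eta=2\Phi\) gives
\[
-\eta([X,Y])=2g(X,\phi Y)=2g(X,PY)=-2g(PX,Y).
\]
Hence \(g([X,Y],\xi)=2g(PX,Y)\), consistently with \eqref{eq:reebparts}. Since this is already recorded in the paper, I would simply cite it.

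Second, suppose \(\Dtilde\) is integrable, and take arbitrary local sections \(X,Y\in\Gamma(\Dtilde)\). Integrability gives \([X,Y]\in\Gamma(\Dtilde)\). By the orthogonal splitting \eqref{eq:ptrsplit}, \(\Dtilde\perp\xi\), so \(g([X,Y],\xi)=0\). Because sections of \(\Dtilde\) are horizontal, \eqref{eq:reebbracket} applies and yields \(g(PX,Y)=0\) for all \(X,Y\in\Gamma(\Dtilde)\). Extending pointwise vectors to local sections, this holds at every point for all \(X,Y\in\Dtilde_p\).

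Third, by \Cref{lem:Pinvariant}, \(PX\in\Dtilde\). Taking \(Y=PX\) gives \(\|PX\|^2=0\), so \(PX=0\). Hence \(P|_{\Dtilde}=0\).

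The only point needing care is the sign and normalization convention in \eqref{eq:reebbracket}, and that only affects the constant factor, not the vanishing. So there is no serious obstacle.

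A remark worth appending: combined with \Cref{prop:injective}, this shows that under maximality an integrable \(\Dtilde\) must be zero.
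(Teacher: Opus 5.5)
Your proof is correct and is essentially the paper's argument. The paper phrases the key step as \(d\eta(X,Y)=0\) for \(X,Y\) tangent to the leaves of the integrable distribution \(\Dtilde\subset\ker\eta\); under \(d\eta=2\Phi\) this is exactly your use of \eqref{eq:reebbracket} together with \(\eta([X,Y])=0\), and both arguments conclude by taking \(Y=PX\) via \Cref{lem:Pinvariant}.
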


\begin{proof}
If \(\Dtilde\) is integrable, its leaves lie in \(\ker\eta\).  The
standard contact identity therefore gives
\[
 d\eta(X,Y)=0
\]
for \(X,Y\in\Dtilde\).  Under the normalization \(d\eta=2\Phi\),
\[
 0=2g(X,\phi Y)=-2g(PX,Y).
\]
Since \(PX\in\Dtilde\) by Lemma~\ref{lem:Pinvariant}, taking \(Y=PX\)
gives \(PX=0\).
\end{proof}

\begin{corollary}[Maximal PTR consequence]\label{cor:reeb-maximal}
If the PTR structure is maximal and \(\Dtilde\ne0\), then
\(\Dtilde\) is not integrable.
\end{corollary}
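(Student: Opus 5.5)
The corollary follows by combining Proposition~\ref{prop:contact-isotropic} with Proposition~\ref{prop:injective}, arguing by contradiction. Assume the PTR structure is maximal in the sense of Definition~\ref{def:maximal}, that \(\Dtilde\ne0\), and, contrary to the claim, that \(\Dtilde\) is integrable.

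First, integrability lets us apply Proposition~\ref{prop:contact-isotropic}, which gives \(P|_{\Dtilde}=0\). Equivalently, by \eqref{eq:reebbracket}, \(g([X,Y],\xi)=2g(PX,Y)\) must vanish for all \(X,Y\in\Dtilde\). Taking \(Y=PX\), which lies in \(\Dtilde\) by Lemma~\ref{lem:Pinvariant}, forces \(PX=0\).

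Second, maximality lets us apply Proposition~\ref{prop:injective}, so \(P|_{\Dtilde}\) is injective. An injective endomorphism that is also identically zero can only act on the zero bundle. Hence \(\Dtilde=0\), contradicting \(\Dtilde\ne0\).

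The same argument can be run directly from the definitions. Take any \(X\in\Dtilde\). The first step gives \(PX=0\), and \(X\perp\xi\), so \(X\in\ker(P|_{TM\cap\ker\eta})=\Dperp\). Since \(\Dperp\perp\Dtilde\), this yields \(X=0\).

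The only point needing care is that maximality and the hypothesis \(\Dtilde\ne0\) are pointwise (constant-rank) statements. It should be checked that the conclusion \(\Dtilde=0\) holds at every point, or at least on the open set where integrability is assumed. Since \(\Dtilde\) has constant rank by Definition~\ref{def:ptr}, vanishing at one point already contradicts \(\Dtilde\ne0\), so this is not a real obstacle.
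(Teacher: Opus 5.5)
Your proof is correct and takes the same approach as the paper: integrability forces $P|_{\Dtilde}=0$ by Proposition~\ref{prop:contact-isotropic}, and maximality makes $P|_{\Dtilde}$ injective by Proposition~\ref{prop:injective}, which is a contradiction when $\Dtilde\ne0$. Your ``direct'' variant simply unpacks the proof of Proposition~\ref{prop:injective}. Your constant-rank remark is harmless but not needed.
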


\begin{proof}
By Proposition~\ref{prop:injective}, maximality makes
\(P|_{\Dtilde}\) injective.  Proposition~\ref{prop:contact-isotropic}
would force \(P|_{\Dtilde}=0\) if \(\Dtilde\) were integrable, a
contradiction.
\end{proof}

\begin{remark}
Proposition~\ref{prop:contact-isotropic} is a standard contact-geometric
consequence: integral submanifolds of the contact distribution are
isotropic.  It does not require the full Sasakian identity
\((\widetilde\nabla_X\phi)Y=g(X,Y)\xi-\eta(Y)X\), and analogous
non-integrability statements are classical in contact CR geometry; see
\cite{Blair2010,Matsumoto1983}.  The PTR-specific use made here is only
the maximal-decomposition consequence in
Corollary~\ref{cor:reeb-maximal}.
\end{remark}

\begin{remark}
The identity \eqref{eq:reebbracket} is a standard Sasakian submanifold identity and is not claimed as new; analogous formulas occur in the quasi-bi-slant literature \cite{PrasadVerma2021}. The point of Corollary~\ref{cor:reeb-maximal} is its combination with the maximal PTR condition.
\end{remark}

The K\"ahler theory also studies PTR-products. In the Sasakian setting Corollary~\ref{cor:reeb-maximal} shows that a direct product based on a nonzero maximal ambiguous distribution cannot be obtained by simply copying the K\"ahler construction. The natural object to examine is instead $\Dtilde\oplus\langle\xi\rangle$; this is a consequence forced by the Sasakian Reeb geometry, not a replacement of the PTR terminology.

\section{The ambiguous distribution and its Reeb extension}
The K\"ahler PTR theory tests integrability of the ambiguous distribution by its component along the totally real distribution. In the Sasakian setting the Reeb component must be tested separately.

\begin{lemma}[Master component identity]\label{lem:master-component}
For \(X,Y\in\Gamma(\Dtilde)\) and \(Z\in\Gamma(\Dperp)\),
\begin{equation}\label{eq:master-component}
g(\nabla_XY,Z)
=
g\bigl(h(X,PY)+\nabla_X^\perp(FY),\phi Z\bigr).
\end{equation}
\end{lemma}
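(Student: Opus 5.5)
The plan is to compute \(g(\nabla_XY,Z)\) by passing to the ambient connection, applying \(\phi\) to both arguments, and then using the Sasakian identity \eqref{eq:sas3} together with the Gauss and Weingarten formulas.

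\textbf{Step 1.} Since \(Z\) is tangent, the Gauss formula gives \(g(\nabla_XY,Z)=g(\widetilde\nabla_XY,Z)\). Since \(Z\in\Dperp\subset\ker\eta\), we have \(\eta(Z)=0\). Identity \eqref{eq:sas2} then gives, for any vector \(U\),
\[
g(U,Z)=g(\phi U,\phi Z).
\]
This holds with no correction term, because the term \(\eta(U)\eta(Z)\) vanishes. Applying this with \(U=\widetilde\nabla_XY\) yields
\[
g(\nabla_XY,Z)=g(\phi\widetilde\nabla_XY,\phi Z).
\]

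\textbf{Step 2.} Write
\[
\phi\widetilde\nabla_XY=\widetilde\nabla_X(\phi Y)-(\widetilde\nabla_X\phi)Y .
\]
By \eqref{eq:sas3}, \((\widetilde\nabla_X\phi)Y=g(X,Y)\xi-\eta(Y)X\). Here \(\eta(Y)=0\) because \(Y\in\Dtilde\perp\xi\). The remaining term \(g(X,Y)\xi\) pairs to zero with \(\phi Z\), since \(g(\xi,\phi Z)=-g(\phi\xi,Z)=0\), or simply because \(\phi Z\) is normal while \(\xi\) is tangent. Hence
\[
g(\nabla_XY,Z)=g(\widetilde\nabla_X(\phi Y),\phi Z).
\]

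\textbf{Step 3.} Decompose \(\phi Y=PY+FY\) as in \eqref{eq:decomp}. By the Gauss formula, \(\widetilde\nabla_X PY=\nabla_XPY+h(X,PY)\). By the Weingarten formula, \(\widetilde\nabla_X FY=-A_{FY}X+\nabla^\perp_X FY\). Because \(\phi Z\) is normal by total reality of \(\Dperp\), the tangential pieces \(\nabla_XPY\) and \(-A_{FY}X\) drop out. What remains is exactly \eqref{eq:master-component}.

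There is no real obstacle here. The only point needing care is the bookkeeping of the Reeb terms. First, the \(\eta\)-correction in \eqref{eq:sas2} vanishes because \(\eta(Z)=0\). Second, the two terms produced by \eqref{eq:sas3} vanish because \(\eta(Y)=0\) and \(\xi\perp\phi Z\). I would check these carefully, since they are precisely where the Sasakian computation differs from the Kähler one.
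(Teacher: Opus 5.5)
Your proposal is correct and matches the paper's proof step for step. Both arguments rewrite $g(\nabla_XY,Z)$ as $g(\phi\widetilde\nabla_XY,\phi Z)$ using $\eta(Z)=0$, then remove $(\widetilde\nabla_X\phi)Y=g(X,Y)\xi-\eta(Y)X$, whose terms pair to zero with the normal vector $\phi Z$, and finally apply the Gauss--Weingarten formulas to $\phi Y=PY+FY$, where the tangential terms drop out; your version is simply more explicit about the Reeb terms.
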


\begin{proof}
Since \(Y,Z\perp\xi\) and \(\phi Z\) is normal,
\[
g(\nabla_XY,Z)=g(\phi\widetilde\nabla_XY,\phi Z).
\]
Using
\[
\phi\widetilde\nabla_XY
=\widetilde\nabla_X(\phi Y)-(\widetilde\nabla_X\phi)Y,
\]
the Sasakian correction term \(g(X,Y)\xi\) is orthogonal to \(\phi Z\).
Writing \(\phi Y=PY+FY\) and applying the Gauss--Weingarten formulas gives
\eqref{eq:master-component}.
\end{proof}

\begin{proposition}\label{prop:ambcrit}
For $X,Y\in\Gamma(\Dtilde)$ define
\[
 \mathcal Q(X,Y)=h(X,PY)-h(Y,PX)+\nabla_X^\perp FY-\nabla_Y^\perp FX.
\]
Then $\Dtilde$ is integrable if and only if
\begin{align}
 g(\mathcal Q(X,Y),\phi Z)&=0 \quad\text{for all }Z\in\Gamma(\Dperp),\label{eq:qcond}\\
 g(PX,Y)&=0\label{eq:reebcond}
\end{align}
for all $X,Y\in\Gamma(\Dtilde)$.
\end{proposition}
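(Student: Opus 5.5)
Since $TM=\Dperp\oplus\Dtilde\oplus\langle\xi\rangle$ orthogonally by \eqref{eq:ptrsplit}, $\Dtilde$ is integrable if and only if, for all $X,Y\in\Gamma(\Dtilde)$, the bracket $[X,Y]$ has vanishing component along $\Dperp$ and along $\xi$. In other words,
\[
 g([X,Y],Z)=0 \quad (Z\in\Gamma(\Dperp)),\qquad g([X,Y],\xi)=0 .
\]
I will therefore compute these two components separately and match them with \eqref{eq:qcond} and \eqref{eq:reebcond}.

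\emph{Reeb component.} Since $X,Y\perp\xi$, identity \eqref{eq:reebbracket} gives $g([X,Y],\xi)=2g(PX,Y)$. This vanishes for all $X,Y\in\Gamma(\Dtilde)$ exactly when \eqref{eq:reebcond} holds.

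\emph{$\Dperp$ component.} I will write $[X,Y]=\nabla_XY-\nabla_YX$, since the Levi-Civita connection is torsion free. Applying Lemma~\ref{lem:master-component} to each term, and using the antisymmetrized form obtained by swapping $X$ and $Y$, gives
\[
 g([X,Y],Z)=g\bigl(h(X,PY)+\nabla^\perp_X FY-h(Y,PX)-\nabla^\perp_Y FX,\ \phi Z\bigr)=g(\mathcal Q(X,Y),\phi Z).
\]
So this component vanishes for all $Z$ exactly when \eqref{eq:qcond} holds. Combining the two components proves both directions of the equivalence at once.

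The only delicate point is verifying the lemma's hypotheses and sign conventions, in particular that the Sasakian correction term $g(X,Y)\xi$ is orthogonal to $\phi Z$, which the lemma's proof already handles. No step should be a real obstacle.

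I would also remark that, by Proposition~\ref{prop:contact-isotropic}, condition \eqref{eq:reebcond} is equivalent to $P|_{\Dtilde}=0$: since $P\Dtilde\subset\Dtilde$ by Lemma~\ref{lem:Pinvariant}, one may take $Y=PX$. This is consistent with Corollary~\ref{cor:reeb-maximal}.
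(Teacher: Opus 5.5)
Your proposal is correct and is the paper's proof: you split $[X,Y]$ along the orthogonal decomposition \eqref{eq:ptrsplit}, get the $\Dperp$-component by antisymmetrizing Lemma~\ref{lem:master-component}, and get the Reeb component from \eqref{eq:reebbracket}. Your closing aside is also correct. Strictly, though, the equivalence of \eqref{eq:reebcond} with $P|_{\Dtilde}=0$ comes from the $Y=PX$ argument, as in Remark~\ref{rem:Dtilde-Reeb-collapse}, rather than from Proposition~\ref{prop:contact-isotropic}.
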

\begin{proof}
Antisymmetrizing \eqref{eq:master-component} in \(X,Y\) gives
\[
g([X,Y],Z)=g(\mathcal Q(X,Y),\phi Z),
\]
so \eqref{eq:qcond} is exactly the vanishing of the
\(\Dperp\)-component.  Equation~\eqref{eq:reebbracket} gives the Reeb
component, and \eqref{eq:reebcond} removes it.  Thus the two conditions
remove exactly the components of \([X,Y]\) transverse to \(\Dtilde\).
\end{proof}

\begin{example}[The missing Reeb condition]\label{ex:ambcrit}
For the pair $Z_1,Z_2$ in Section~\ref{sec:explicit}, the bracket is a pure Reeb multiple and hence has no $\Dperp$-component. Thus the K\"ahler-type normal test alone sees no obstruction. However
$g(PZ_1,Z_2)=-\cos\alpha\ne0$, so \eqref{eq:reebcond} fails and $\Dtilde$ is not integrable. This example isolates the precise Sasakian correction to the K\"ahler criterion.
\end{example}

\begin{proposition}\label{prop:Hcrit}
Let $\mathcal H=\Dtilde\oplus\langle\xi\rangle$. Then $\mathcal H$ is integrable if and only if
\[
 g(\mathcal Q(U,V),\phi Z)=0
\]
for all $U,V\in\Gamma(\mathcal H)$ and $Z\in\Gamma(\Dperp)$, with $P\xi=F\xi=0$.
\end{proposition}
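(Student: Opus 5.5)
Since \(TM=\Dperp\oplus\mathcal H\) orthogonally, \(\mathcal H\) is integrable if and only if \(g([U,V],Z)=0\) for all \(U,V\in\Gamma(\mathcal H)\) and \(Z\in\Gamma(\Dperp)\). The plan is to compute this single \(\Dperp\)-component through an extension of Lemma~\ref{lem:master-component} to arguments in \(\mathcal H\). No separate Reeb condition like \eqref{eq:reebcond} is needed: the Reeb component of \([U,V]\) already lies in \(\mathcal H\).

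\textbf{Step 1: extend the master identity.} I will show that for \(U\in\Gamma(TM)\), \(V\in\Gamma(\mathcal H)\) and \(Z\in\Gamma(\Dperp)\),
\[
g(\nabla_UV,Z)=g\bigl(h(U,PV)+\nabla_U^\perp(FV),\phi Z\bigr).
\]
Since \(Z\perp\xi\), \eqref{eq:sas2} gives \(g(\widetilde\nabla_UV,Z)=g(\phi\widetilde\nabla_UV,\phi Z)\). Then write \(\phi\widetilde\nabla_UV=\widetilde\nabla_U(\phi V)-(\widetilde\nabla_U\phi)V\), and use \eqref{eq:sas3}: the correction is \(g(U,V)\xi-\eta(V)U\).
\begin{itemize}
\item The term \(g(U,V)\xi\) is orthogonal to \(\phi Z\).
\item The term \(\eta(V)U\) is tangent, while \(\phi Z\) is normal, so it also pairs to zero.
\end{itemize}
This is the only new point compared with the proof of Lemma~\ref{lem:master-component}, which assumed \(V\perp\xi\). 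Finally, writing \(\phi V=PV+FV\) and applying the Gauss and Weingarten formulas, the tangential part \(-A_{FV}U\) drops out against the normal vector \(\phi Z\), which gives the displayed formula.

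\textbf{Step 2: antisymmetrize.} Exchanging \(U\) and \(V\) and subtracting, using torsion-freeness of \(\nabla\), gives
\[
g([U,V],Z)=g(\mathcal Q(U,V),\phi Z),
\]
with \(\mathcal Q\) extended to \(\mathcal H\) by the same formula. When \(\xi\) is an argument, the convention \(P\xi=F\xi=0\) is used. For example, \(\mathcal Q(X,\xi)=-h(\xi,PX)-\nabla^\perp_\xi FX\), since \(h(X,P\xi)=0\) and \(\nabla_X^\perp F\xi=0\). This follows because \(P\xi=0\) and \(F\xi=0\) by Lemma~\ref{lem:Pinvariant} and \(\phi\xi=0\). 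The equivalence stated in the proposition then follows at once.

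\textbf{Expected obstacle.} The main care is the Reeb slot. One must check that the correction term \(-\eta(V)U\) from \eqref{eq:sas3} really vanishes against \(\phi Z\), which is the tangent-versus-normal argument in Step 1. One must also make sure that \(F\xi=0\) is applied as an identity of sections, so that \(\nabla^\perp(F\xi)=0\), and not merely pointwise.

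As a consistency check I will also verify the bracket directly. By \eqref{eq:reebparts}, \([X,\xi]=\nabla_X\xi-\nabla_\xi X=-PX-\nabla_\xi X\), and \(PX\in\Dtilde\), so its \(\Dperp\)-part is \(-g(\nabla_\xi X,Z)\). This matches the formula for \(\mathcal Q(X,\xi)\) obtained in Step 2.
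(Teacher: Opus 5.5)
Your proposal is correct and follows the paper's proof. Both arguments reduce integrability of $\mathcal H$ to the vanishing of the $\Dperp$-component of $[U,V]$, compute that component through the master component identity, and note that the Sasakian correction term is tangent and therefore orthogonal to $\phi Z$. The only difference is cosmetic: you prove the extended identity once for all $U\in\Gamma(TM)$ and $V\in\Gamma(\mathcal H)$, observing that the whole term $g(U,V)\xi-\eta(V)U$ is tangent, whereas the paper treats the case $U,V\in\Dtilde$ and the case with a $\xi$-argument separately.
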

\begin{proof}
Because \(\xi\in\mathcal H\), only the \(\Dperp\)-component of
\([U,V]\) must vanish.  For horizontal \(U,V\in\Dtilde\) the master
component calculation applies.  If one argument is \(\xi\), then
\[
(\widetilde\nabla_V\phi)\xi=-V+\eta(V)\xi
\]
is tangent and therefore orthogonal to \(\phi Z\).  Thus in every case
\[
g([U,V],Z)=g(\mathcal Q(U,V),\phi Z),
\]
which proves the assertion.
\end{proof}

Proposition~\ref{prop:Hcrit} records the corresponding integrability test
after the Reeb direction is adjoined to the ambiguous distribution.

\begin{example}[Integrable Reeb extension]\label{ex:Hcrit}
In Section~\ref{sec:explicit}, the brackets that force $\Dtilde$ to be non-integrable are multiples of $\xi$, while the relevant mixed brackets remain in $\Dtilde\oplus\langle\xi\rangle$. Consequently $\mathcal H$ is integrable. Thus adjoining the already existing Reeb direction absorbs exactly the obstruction displayed in Example~\ref{ex:ambcrit}.
\end{example}

\subsection{An explicit maximal PTR example}\label{sec:explicit}
We finish with an explicit model showing that the ambiguous distribution may contain more than one slant block while the PTR distribution is maximal.

Consider $\mathbb R^{11}$ with coordinates $(x_1,\ldots,x_5,y_1,\ldots,y_5,z)$ and the standard Sasakian frame
\[
 E_i=2\frac{\partial}{\partial y_i},\qquad
 F_i=2\left(\frac{\partial}{\partial x_i}+y_i\frac{\partial}{\partial z}\right),\qquad
 \xi=2\frac{\partial}{\partial z},
\]
with $\phi E_i=F_i$, $\phi F_i=-E_i$ and $\phi\xi=0$. Choose
\[
 0<\alpha,\beta<\frac\pi2,\qquad \alpha\ne\beta,
\]
and define
\[
 \Psi(u,v,s,t,r,q)=
 (2u,0,2s,0,2r,2v\cos\alpha,2v\sin\alpha,2t\cos\beta,2t\sin\beta,0,2q).
\]
The image is an immersed six-dimensional submanifold tangent to $\xi$. An adapted tangent frame is
\[
 Z_1=F_1,\quad Z_2=\cos\alpha E_1+\sin\alpha E_2,
\quad Z_3=F_3,\quad Z_4=\cos\beta E_3+\sin\beta E_4,
\quad Z_5=F_5,\quad \xi.
\]
Set
\[
 \Dperp=\operatorname{span}\{Z_5\},\qquad
 \Dtilde=D_\alpha\oplus D_\beta,
\]
where $D_\alpha=\operatorname{span}\{Z_1,Z_2\}$ and $D_\beta=\operatorname{span}\{Z_3,Z_4\}$. Since $\phi Z_5=-E_5$ is normal, $\Dperp$ is totally real.

Direct projection gives
\[
 PZ_1=-\cos\alpha Z_2,\qquad PZ_2=\cos\alpha Z_1,
\]
and similarly
\[
 PZ_3=-\cos\beta Z_4,\qquad PZ_4=\cos\beta Z_3.
\]
Therefore
\[
 P^2|_{D_\alpha}=-\cos^2\alpha I,
 \qquad P^2|_{D_\beta}=-\cos^2\beta I.
\]
Because both cosines are nonzero,
\[
 \ker(P|_{TM\cap\ker\eta})=\operatorname{span}\{Z_5\}=\Dperp.
\]
Hence the totally real distribution is maximal. Since $\alpha\ne\beta$, the ambiguous distribution is not a single slant or pointwise slant block; it contains two distinct constant slant blocks. This gives a concrete proper PTR example in the sense of \Cref{def:proper}.

Finally the standard frame brackets give
\[
 [Z_1,Z_2]=-2\cos\alpha\,\xi,
 \qquad [Z_3,Z_4]=-2\cos\beta\,\xi.
\]
Thus $\Dtilde$ is non-integrable, exactly as predicted by Corollary~\ref{cor:reeb-maximal}. On the other hand these obstructing components lie in the Reeb direction, and in this example the mixed brackets remain in $\Dtilde\oplus\langle\xi\rangle$. Hence
\[
 \Dtilde\oplus\langle\xi\rangle
\]
is integrable. The example therefore displays the specific Sasakian modification of the K\"ahler PTR picture: the PTR terminology and decomposition remain unchanged in spirit, while the Reeb field alters the integrability and product geometry.

\subsection{Where the K\"ahler PTR program changes}\label{subsec:program-divergence}

The preceding results identify the main points at which the Sasakian
program is not a formal transcription of the K\"ahler theory.

First,
\[
g([X,Y],\xi)=2g(PX,Y),\qquad X,Y\in\Dtilde.
\]
The contact identity behind this formula is standard, but its consequence
for the PTR program is decisive: a nonzero maximal ambiguous distribution
cannot be integrable unless its tangential \(\phi\)-part collapses.

Second,
\begin{equation}\label{eq:program-hXxi}
h(X,\xi)=-FX.
\end{equation}
There is no K\"ahler counterpart of this identity because no distinguished
tangent Reeb field is present there.  Equation~\eqref{eq:program-hXxi}
is the mechanism behind the totally umbilical collapse in
Section~\ref{sec:umbilical}.

Third, the Sasakian covariant derivative of the tangential canonical
morphism contains the Reeb term.  Consequently parallel \(P\) forces
\(P=0\) by Proposition~\ref{prop:parallelP-proper}, rather than yielding a nontrivial product situation.

Fourth, in a Sasakian space form the PTR mixed-curvature component carries
the factor \(c-1\).  Thus the distinguished value becomes \(c=1\), in
contrast with the flat value singled out by the corresponding K\"ahler
complex-space-form calculation.

The individual identities used here are standard Sasakian ingredients.
Their role in this paper is programmatic: they determine which stages of
the K\"ahler PTR construction survive unchanged and which become rigidity
or non-existence statements; see Table~\ref{tab:kahler-sasakian-program}.

\section{Geometry of the distributions and the tangential morphism \(P\)}
We now continue the sequence of the K\"ahler PTR paper.  The point is not to replace its notions, but to identify exactly which statements survive after the Reeb direction is separated from the contact distribution.

\begin{lemma}\label{lem:Dperp-geodesic}
The totally real distribution \(\Dperp\) has totally geodesic leaves in
\(M\) if and only if
\begin{equation}\label{eq:Dperp-geodesic-correct}
g\bigl(h(Z,PX)+\nabla_Z^\perp(FX),\phi W\bigr)=0
\end{equation}
for all \(Z,W\in\Gamma(\Dperp)\) and
\(X\in\Gamma(\Dtilde)\).
\end{lemma}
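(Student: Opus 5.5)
The plan is to use the standard characterization: a distribution has totally geodesic leaves in \(M\) exactly when \(\nabla_ZW\in\Gamma(\Dperp)\) for all \(Z,W\in\Gamma(\Dperp)\). This condition also forces integrability, since \([Z,W]=\nabla_ZW-\nabla_WZ\). By the splitting \eqref{eq:ptrsplit}, it then suffices to show two things. First, the Reeb component \(g(\nabla_ZW,\xi)\) vanishes automatically. Second, the \(\Dtilde\)-component \(g(\nabla_ZW,X)\), for \(X\in\Gamma(\Dtilde)\), equals (up to sign) the left-hand side of \eqref{eq:Dperp-geodesic-correct}.

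For the Reeb component, I would differentiate \(g(W,\xi)=0\) and use \eqref{eq:reebparts}:
\[
g(\nabla_ZW,\xi)=-g(W,\nabla_Z\xi)=g(W,PZ).
\]
This vanishes because \(PZ=0\) by Lemma~\ref{lem:Pinvariant}. So only the \(\Dtilde\)-component can obstruct.

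For the \(\Dtilde\)-component, I would differentiate \(g(W,X)=0\) to get \(g(\nabla_ZW,X)=-g(\nabla_ZX,W)\). Then I would repeat the argument of Lemma~\ref{lem:master-component} with the roles of the distributions exchanged. Since \(\eta(W)=0\), \eqref{eq:sas2} gives
\[
g(\nabla_ZX,W)=g(\widetilde\nabla_ZX,W)=g(\phi\widetilde\nabla_ZX,\phi W).
\]
Next I would write \(\phi\widetilde\nabla_ZX=\widetilde\nabla_Z(\phi X)-(\widetilde\nabla_Z\phi)X\). By \eqref{eq:sas3}, the correction term is \(g(Z,X)\xi-\eta(X)Z\), which is zero because \(Z\perp X\) and \(\eta(X)=0\). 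Splitting \(\phi X=PX+FX\) and applying the Gauss and Weingarten formulas, the tangential pieces \(\nabla_ZPX\) and \(-A_{FX}Z\) drop out against the normal vector \(\phi W\). This leaves
\[
g(\nabla_ZW,X)=-g\bigl(h(Z,PX)+\nabla_Z^\perp(FX),\phi W\bigr).
\]
Combining the two components, \(\nabla_ZW\in\Dperp\) for all \(Z,W\) holds if and only if \eqref{eq:Dperp-geodesic-correct} holds, and both directions of the equivalence follow.

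The only delicate point is the bookkeeping of the Sasakian correction \((\widetilde\nabla_Z\phi)X\) and of the \(\xi\)-component. Here, unlike the ambiguous case of Proposition~\ref{prop:ambcrit}, both vanish identically: \(\eta(X)=0\), \(g(Z,X)=0\), and \(PZ=0\). Consequently the criterion has the same shape as the K\"ahler one, with no extra Reeb condition.
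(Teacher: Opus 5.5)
Your proposal is correct and follows essentially the paper's route: the Reeb component $g(\nabla_ZW,\xi)=g(W,PZ)$ vanishes automatically, and the $\Dtilde$-component equals $-g\bigl(h(Z,PX)+\nabla_Z^\perp(FX),\phi W\bigr)$ via the $\phi$-isometry, the Sasakian identity and Gauss--Weingarten. Your one deviation is to first move the derivative onto $X$ using $g(\nabla_ZW,X)=-g(\nabla_ZX,W)$, which is the master-component computation with roles exchanged. This makes the final pairing with the normal vector $\phi W$ immediate, whereas the paper's sketch (which differentiates $W$ and pairs with $\phi X$) leaves that step implicit.
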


\begin{proof}
The leaves of \(\Dperp\) are totally geodesic in \(M\) exactly when
\[
g(\nabla_ZW,X)=0
\]
for all \(Z,W\in\Dperp\), \(X\in\Dtilde\); the Reeb component vanishes
automatically because
\[
g(\nabla_ZW,\xi)=-g(W,\nabla_Z\xi)=g(W,PZ)=0.
\]
Since \(W\perp\xi\),
\[
g(\nabla_ZW,X)
=
g(\phi\widetilde\nabla_ZW,\phi X).
\]
Using
\(\phi X=PX+FX\), the Sasakian identity, and the Gauss--Weingarten
formulas, the component paired with \(\phi W\) reduces to
\eqref{eq:Dperp-geodesic-correct}.  Hence this condition is equivalent to
the vanishing of the full \(\Dtilde\)-component of \(\nabla_ZW\).
\end{proof}

\begin{remark}
A shape-operator rewriting controls only the
\(P\Dtilde\)-component in general.  It becomes equivalent to the full
criterion above when \(P\Dtilde=\Dtilde\), in particular under the usual
maximal nondegeneracy condition on the ambiguous part.
\end{remark}

\begin{lemma}\label{lem:Dtilde-geodesic}
The ambiguous distribution \(\Dtilde\) is integrable and its leaves are
totally geodesic in \(M\) if and only if, for all
\(X,Y\in\Gamma(\Dtilde)\) and \(Z\in\Gamma(\Dperp)\),
\begin{align}
g\bigl(h(X,PY)+\nabla_X^\perp(FY),\phi Z\bigr)&=0,
\label{eq:Dtilde-geodesic-condition}\\
g(PX,Y)&=0.
\label{eq:Dtilde-reeb-geodesic}
\end{align}
\end{lemma}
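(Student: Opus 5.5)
We reduce the statement to component conditions on $\nabla_XY$ for $X,Y\in\Gamma(\Dtilde)$. The distribution $\Dtilde$ is integrable with totally geodesic leaves in $M$ exactly when $\nabla_XY\in\Gamma(\Dtilde)$ for all $X,Y\in\Gamma(\Dtilde)$. Indeed, this condition makes $[X,Y]=\nabla_XY-\nabla_YX$ lie in $\Dtilde$, and it says the leaves are autoparallel. Conversely, integrability together with totally geodesic leaves gives $\nabla_XY\in\Dtilde$.

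By \eqref{eq:ptrsplit}, $TM=\Dperp\oplus\Dtilde\oplus\langle\xi\rangle$. So the condition splits into two parts: the vanishing of the $\Dperp$-component $g(\nabla_XY,Z)$ for $Z\in\Gamma(\Dperp)$, and the vanishing of the Reeb component $g(\nabla_XY,\xi)$.

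\emph{The $\Dperp$-component.} We invoke Lemma~\ref{lem:master-component} directly:
\[
g(\nabla_XY,Z)=g\bigl(h(X,PY)+\nabla_X^\perp(FY),\phi Z\bigr).
\]
Hence its vanishing for all $X,Y,Z$ is precisely \eqref{eq:Dtilde-geodesic-condition}.

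\emph{The Reeb component.} Since $Y\perp\xi$, we differentiate $g(Y,\xi)=0$ and use \eqref{eq:reebparts}:
\[
g(\nabla_XY,\xi)=-g(Y,\nabla_X\xi)=g(Y,PX).
\]
So the Reeb component vanishes for all $X,Y\in\Dtilde$ if and only if \eqref{eq:Dtilde-reeb-geodesic} holds. This is the analogue of the Reeb computation in the proof of Lemma~\ref{lem:Dperp-geodesic}. There, however, it was automatic because $P\Dperp=0$; here it is a genuine condition.

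Combining the two components gives both directions of the equivalence.

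The only point needing care is the logical equivalence in the first paragraph. Integrability plus totally geodesic leaves is equivalent to $\nabla_XY\in\Dtilde$ for \emph{all} sections $X,Y$, not merely to the symmetrized or antisymmetrized versions. One should also note that \eqref{eq:Dtilde-geodesic-condition} is imposed for all ordered pairs $(X,Y)$, so it is strictly stronger than the antisymmetrized condition \eqref{eq:qcond} of Proposition~\ref{prop:ambcrit}.

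As a consistency check, \eqref{eq:Dtilde-reeb-geodesic} with $Y=PX$ forces $P|_{\Dtilde}=0$, in agreement with Proposition~\ref{prop:contact-isotropic}. I expect no real obstacle beyond bookkeeping of the Reeb term.
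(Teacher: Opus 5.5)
Your proposal is correct and matches the paper's proof: both reduce the statement to \(\nabla_XY\in\Dtilde\) for all \(X,Y\in\Gamma(\Dtilde)\). Both then identify the \(\Dperp\)-component through Lemma~\ref{lem:master-component} and the Reeb component through \(g(\nabla_XY,\xi)=g(PX,Y)\). The only additions on your side are that you spell out the standard equivalence with autoparallel leaves and derive the Reeb identity from \(\nabla_X\xi=-PX\), both of which the paper uses without comment.
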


\begin{proof}
By Lemma~\ref{lem:master-component}, condition 
\eqref{eq:Dtilde-geodesic-condition} is exactly the
vanishing of the \(\Dperp\)-component of \(\nabla_XY\).  Since
\[
g(\nabla_XY,\xi)=g(PX,Y),
\]
the second condition removes the Reeb component.  Hence both conditions
are equivalent to \(\nabla_XY\in\Dtilde\) for all
\(X,Y\in\Dtilde\).
\end{proof}

\begin{remark}\label{rem:Dtilde-Reeb-collapse}
Because \(PX\in\Dtilde\), condition
\eqref{eq:Dtilde-reeb-geodesic} is equivalent to
\(P|_{\Dtilde}=0\): take \(Y=PX\).  This is the precise leaf-level point
at which the direct K\"ahler PTR picture changes in the Sasakian setting.
\end{remark}

\begin{corollary}\label{cor:proper-no-geodesic}
If a PTR-submanifold has a nonzero ambiguous distribution whose leaves are totally geodesic in $M$, then $P|_{\Dtilde}=0$.  In particular, a proper PTR-submanifold cannot have totally geodesic ambiguous leaves.
\end{corollary}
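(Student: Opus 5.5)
The plan is to read Corollary~\ref{cor:proper-no-geodesic} off Lemma~\ref{lem:Dtilde-geodesic} and Remark~\ref{rem:Dtilde-Reeb-collapse}. The hypothesis ``leaves totally geodesic in \(M\)'' presupposes that \(\Dtilde\) is integrable, so it is exactly the situation characterized in Lemma~\ref{lem:Dtilde-geodesic}. Equivalently, \(\nabla_XY\in\Gamma(\Dtilde)\) for all \(X,Y\in\Gamma(\Dtilde)\). In particular the Reeb component of \(\nabla_XY\) must vanish.

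The key step is the identity \(g(\nabla_XY,\xi)=-g(Y,\nabla_X\xi)=g(Y,PX)\), which follows from \eqref{eq:reebparts} and \(Y\perp\xi\). Total geodesicity therefore yields \eqref{eq:Dtilde-reeb-geodesic}, namely \(g(PX,Y)=0\) for all \(X,Y\in\Dtilde\). By Lemma~\ref{lem:Pinvariant} we have \(PX\in\Dtilde\), so the choice \(Y=PX\) is admissible. It gives \(\|PX\|^2=0\), hence \(P|_{\Dtilde}=0\). Alternatively, one could simply cite Proposition~\ref{prop:contact-isotropic}, since integrability alone already forces this. The totally geodesic hypothesis is stronger than needed, and I would note that in passing.

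For the second assertion, \(P|_{\Dtilde}=0\) means \(\phi\Dtilde\subset\Tperp\), so \(\Dtilde\) is totally real. This contradicts Definition~\ref{def:proper}, which requires a proper PTR structure to have \(\Dtilde\) not totally real. The only care needed is the nonzero hypothesis on \(\Dtilde\): if \(\Dtilde=0\), the statement is vacuous and the structure is not proper in any meaningful sense. I would also remark that properness is understood on an open region where the type of \(\Dtilde\) is constant, and the conclusion \(P|_{\Dtilde}=0\) holds pointwise on that region.

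I expect no real obstacle here; the argument is a direct combination of earlier results. The one point to state carefully is that ``totally geodesic leaves'' includes integrability, so that Lemma~\ref{lem:Dtilde-geodesic} applies and the Reeb condition is genuinely available.
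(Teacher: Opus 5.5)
Your proposal is correct and is essentially the paper's argument: it extracts the Reeb condition $g(\nabla_XY,\xi)=g(PX,Y)=0$ from total geodesicity (Lemma~\ref{lem:Dtilde-geodesic}), takes $Y=PX$ as in Remark~\ref{rem:Dtilde-Reeb-collapse} to get $P|_{\Dtilde}=0$, and concludes that $\Dtilde$ is totally real, which contradicts properness. Your side observation that integrability alone already suffices, via Proposition~\ref{prop:contact-isotropic}, is also correct.
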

\begin{proof}
By Remark~\ref{rem:Dtilde-Reeb-collapse}, the Reeb condition for a
totally geodesic ambiguous leaf forces \(P|_{\Dtilde}=0\).  Hence
\(\Dtilde\) is totally real, contrary to properness.
\end{proof}

\begin{example}[Sharpness of Corollary~\ref{cor:proper-no-geodesic}]\label{ex:proper-no-geodesic}
The two-angle model of Section~\ref{sec:explicit} is proper and has $PZ_1=-\cos\alpha Z_2\neq0$.  Its ambiguous leaves are therefore not totally geodesic.  On the other hand, if one specializes to a totally real ambiguous distribution ($P=0$), the Reeb obstruction disappears.  Thus the corollary distinguishes proper PTR geometry from its totally real special case rather than asserting a general non-existence of totally geodesic leaves.
\end{example}

The K\"ahler paper next introduces PTR-products.  We retain that terminology only for the direct product of the two PTR distributions; no new product terminology is introduced.

\begin{theorem}[Direct PTR-product obstruction]\label{thm:ptrproduct-shape}
If a Sasakian PTR-submanifold is a PTR-product in the direct K\"ahler
sense, then
\[
P|_{\Dtilde}=0.
\]
Consequently no proper PTR-submanifold tangent to \(\xi\) is a direct
two-factor PTR-product.
\end{theorem}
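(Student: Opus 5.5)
The argument will mostly unpack what ``PTR-product in the direct K\"ahler sense'' means and then reduce to results already established. Following the K\"ahler PTR paper, a PTR-product is a PTR-submanifold that is locally a Riemannian product of a leaf of the totally real distribution and a leaf of the ambiguous distribution. In the direct two-factor reading adopted here, the factors are exactly \(\Dperp\) and \(\Dtilde\), and both distributions are integrable with totally geodesic leaves in \(M\). The first step is to fix this reading explicitly. In particular, \(\Dtilde\) must be integrable and its leaves must be totally geodesic, since a local Riemannian product has totally geodesic factors by the de Rham characterization: both distributions are parallel with respect to \(\nabla\).

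The second step applies Lemma~\ref{lem:Dtilde-geodesic}. Under the product hypothesis, \(\nabla_XY\in\Dtilde\) for all \(X,Y\in\Gamma(\Dtilde)\), so condition \eqref{eq:Dtilde-reeb-geodesic} holds. The Reeb component can also be read off directly from \eqref{eq:reebparts}:
\[
g(\nabla_XY,\xi)=-g(Y,\nabla_X\xi)=g(Y,PX).
\]
This component must vanish. Remark~\ref{rem:Dtilde-Reeb-collapse} then finishes this step: take \(Y=PX\), which is allowed because \(PX\in\Dtilde\) by Lemma~\ref{lem:Pinvariant}. This gives \(\|PX\|^2=0\), hence \(P|_{\Dtilde}=0\).

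Alternatively, integrability of \(\Dtilde\) alone suffices by Proposition~\ref{prop:contact-isotropic}. I would mention this as a second route, since it shows that only the integrability half of the product hypothesis is really used.

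For the consequence, \(P|_{\Dtilde}=0\) means \(\phi\Dtilde\subset\Tperp\), so \(\Dtilde\) is totally real. This contradicts properness in Definition~\ref{def:proper}. I would also cover the degenerate case \(\Dtilde=0\): there is then no genuine two-factor product, and it is not proper in any case.

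The main obstacle is definitional rather than computational. One has to pin down that ``direct K\"ahler sense'' means the factors are \(\Dperp\) and \(\Dtilde\) themselves, and not \(\Dperp\) and \(\Dtilde\oplus\langle\xi\rangle\). If \(\xi\) were absorbed into one factor, the obstruction could disappear, as Example~\ref{ex:Hcrit} suggests. I will therefore state that reading before giving the short argument above.
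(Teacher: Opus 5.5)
Your proposal is correct and follows the paper's proof: the product hypothesis makes the ambiguous leaves totally geodesic, so Lemma~\ref{lem:Dtilde-geodesic} gives $g(PX,Y)=0$ on $\Dtilde$. Taking $Y=PX$ as in Remark~\ref{rem:Dtilde-Reeb-collapse} then gives $P|_{\Dtilde}=0$, which contradicts properness. Your side remark is also valid: integrability of $\Dtilde$ alone already suffices via Proposition~\ref{prop:contact-isotropic}, so the argument uses less of the product hypothesis than the paper invokes.
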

\begin{proof}
A local product requires the ambiguous leaves to be totally geodesic.
Lemma~\ref{lem:Dtilde-geodesic} gives \(g(PX,Y)=0\) for all
\(X,Y\in\Dtilde\), and Remark~\ref{rem:Dtilde-Reeb-collapse} then gives
\(P|_{\Dtilde}=0\).  This contradicts properness.
\end{proof}

\begin{example}[Failure of the direct PTR-product]\label{ex:ptrproduct-shape}
In Section~\ref{sec:explicit}, $PZ_1=-\cos\alpha Z_2$ and $PZ_3=-\cos\beta Z_4$.  Hence $P|_{\Dtilde}\neq0$, and Theorem~\ref{thm:ptrproduct-shape} excludes the direct K\"ahler PTR-product.  Notice that this conclusion is stronger than merely observing a nonzero bracket: it identifies the structural reason that every proper ambiguous block conflicts with the direct two-factor product.
\end{example}

For the canonical morphisms, define
\[
 (\bar\nabla_XP)Y=\nabla_XPY-P\nabla_XY.
\]
The Sasakian identity produces an extra term absent in the K\"ahler formula.

\begin{lemma}\label{lem:parallelP}
For tangent vector fields $X,Y$,
\begin{equation}\label{eq:nablaP}
 (\bar\nabla_XP)Y=A_{FY}X+t h(X,Y)+g(X,Y)\xi-\eta(Y)X,
\end{equation}
where $\phi V=tV+fV$ is the tangential--normal decomposition of $\phi V$ for a normal vector field $V$.  Hence $P$ is parallel if and only if the right-hand side of \eqref{eq:nablaP} vanishes identically.
\end{lemma}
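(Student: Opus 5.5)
The plan is to start from the Sasakian identity \eqref{eq:sas3} applied to tangent vector fields \(X,Y\). We expand both \(\widetilde\nabla_X(\phi Y)\) and \(\phi(\widetilde\nabla_X Y)\) with the Gauss and Weingarten formulas, using the decompositions \eqref{eq:decomp}, and then read off the tangential component. Throughout we take the conventions \(\widetilde\nabla_XY=\nabla_XY+h(X,Y)\) and \(\widetilde\nabla_XV=-A_VX+\nabla^\perp_XV\), which are the ones already used implicitly in \eqref{eq:reebparts} and Lemma~\ref{lem:master-component}.

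First, since \(\phi Y=PY+FY\), we have
\[
\widetilde\nabla_X(\phi Y)=\nabla_XPY+h(X,PY)-A_{FY}X+\nabla_X^\perp FY .
\]
Second,
\[
\phi\widetilde\nabla_XY=\phi\nabla_XY+\phi h(X,Y)=P\nabla_XY+F\nabla_XY+t h(X,Y)+f h(X,Y).
\]
Subtracting the second expression from the first gives \((\widetilde\nabla_X\phi)Y\), which by \eqref{eq:sas3} equals \(g(X,Y)\xi-\eta(Y)X\). This right-hand side is entirely tangent, because \(\xi\) is tangent to \(M\).

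Next we take tangential parts. This yields
\[
\nabla_XPY-A_{FY}X-P\nabla_XY-t h(X,Y)=g(X,Y)\xi-\eta(Y)X,
\]
which rearranges to \eqref{eq:nablaP} after we move the shape-operator and \(t\)-terms across. As a by-product, the normal parts give the companion formula for \(F\), namely \(h(X,PY)+\nabla^\perp_XFY-F\nabla_XY-fh(X,Y)=0\). We record this for later use in Lemma~\ref{lem:parallelF}.

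The \enquote{hence} clause is then immediate: \(P\) is parallel exactly when \(\bar\nabla P\equiv0\), that is, when the right-hand side of \eqref{eq:nablaP} vanishes for all \(X,Y\).

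The only real obstacle is bookkeeping of signs. We must keep the sign of the Weingarten term \(-A_{FY}X\) straight, and confirm that the Reeb correction \(g(X,Y)\xi-\eta(Y)X\) lands wholly in the tangential equation. As a consistency check we will set \(Y=\xi\). Then \(P\xi=F\xi=0\), so the left side is \(-P\nabla_X\xi=P^2X\). The right side becomes \(t h(X,\xi)+\eta(X)\xi-X=-tFX+\eta(X)\xi-X\). Applying \(\phi^2=-I+\eta\otimes\xi\) to \(X\) gives \(P^2X+tFX=-X+\eta(X)\xi\), and this matches. The check confirms both the sign conventions and \eqref{eq:reebparts}.
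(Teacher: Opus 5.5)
Your proposal is correct and follows the same route as the paper. You substitute $\phi Y=PY+FY$ into $(\widetilde\nabla_X\phi)Y=g(X,Y)\xi-\eta(Y)X$, expand both sides with the Gauss and Weingarten formulas, and equate tangential components, with the Reeb correction lying entirely in $TM$ because $\xi\in\Gamma(TM)$; your $Y=\xi$ check against the tangential part of $\phi^2X=-X+\eta(X)\xi$ is a correct extra confirmation of the sign conventions.
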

\begin{proof}
Insert $\phi Y=PY+FY$ into
\[
 (\widetilde\nabla_X\phi)Y=g(X,Y)\xi-\eta(Y)X
\]
and use Gauss and Weingarten on both sides.  Equating tangent components gives \eqref{eq:nablaP}.
\end{proof}

\begin{example}[The Reeb test for parallel $P$]\label{ex:parallelP}
Set $Y=\xi$ in \eqref{eq:nablaP}.  Since $P\xi=F\xi=0$ and $h(X,\xi)=-FX$, parallelness would require
\[
 t(-FX)+\eta(X)\xi-X=0.
\]
For horizontal $X$ this becomes $t(FX)=-X$.  In a proper slant block of the explicit model, $FX\neq0$ but $t(FX)$ is only the complementary tangential part dictated by the slant decomposition; the identity fails whenever $PX\neq0$.  Thus the K\"ahler implication ``parallel $P$ gives a local product'' cannot be imported without first satisfying this Reeb test.
\end{example}

\begin{proposition}\label{prop:parallelP-proper}
If $P$ is parallel on a Sasakian PTR-submanifold tangent to $\xi$, then $P=0$ on $TM\cap\ker\eta$.  Consequently a proper PTR-submanifold cannot have parallel $P$.
\end{proposition}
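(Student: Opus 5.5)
The plan is to evaluate formula \eqref{eq:nablaP} of Lemma~\ref{lem:parallelP} at the single argument \(Y=\xi\), as in Example~\ref{ex:parallelP}, and then extract a norm identity for a horizontal \(X\). First, assume \(\bar\nabla P=0\) and substitute \(Y=\xi\). Since \(F\xi=0\) we have \(A_{F\xi}X=0\), and \eqref{eq:reebparts} gives \(h(X,\xi)=-FX\). For horizontal \(X\) (so that \(g(X,\xi)=0\) and \(\eta(\xi)=1\)), the right-hand side of \eqref{eq:nablaP} therefore reduces to
\[
0=-t(FX)-X,\qquad\text{that is,}\qquad t(FX)=-X .
\]
As a consistency check, the left-hand side can be computed directly: \((\bar\nabla_XP)\xi=\nabla_X(P\xi)-P\nabla_X\xi=P(PX)=P^2X\), using \(\nabla_X\xi=-PX\). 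So the identity just obtained is equivalent to \(P^2X=0\) for every horizontal \(X\).

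Second, I will relate \(t F\) to \(P^2\) by applying \(\phi\) twice to the horizontal \(X\). We have \(\phi^2X=-X\), while
\[
\phi^2X=\phi(PX+FX)=P^2X+FPX+tFX+fFX .
\]
Comparing tangential parts gives \(P^2X+tFX=-X\). Combined with \(t(FX)=-X\), this forces \(P^2X=0\), in agreement with the first step. Skew-symmetry of \(P\) then gives
\[
\|PX\|^2=-g(P^2X,X)=0,
\]
so \(PX=0\) for all \(X\in TM\cap\ker\eta\). Since \(P\xi=0\) holds anyway, \(P=0\) on all of \(TM\).

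Finally, properness rules out \(P|_{\Dtilde}=0\), because that condition makes \(\Dtilde\) totally real, contrary to Definition~\ref{def:proper}. This gives the consequence stated in the proposition. The main point requiring care is the sign bookkeeping in Lemma~\ref{lem:parallelP}: specifically the sign of \(h(X,\xi)=-FX\) and the Weingarten convention behind \(A_{FY}\). I will cross-check these against the direct computation \((\bar\nabla_XP)\xi=P^2X\), which relies only on \(\nabla_X\xi=-PX\) and \(P\xi=0\). That direct route alone already gives \(P^2=0\) on horizontal vectors, and it is the version I would present to avoid any dependence on these conventions.
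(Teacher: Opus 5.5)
Your proposal is correct and matches the paper's proof. The paper sets $Y=\xi$ in \eqref{eq:nablaP} to get $t(FX)=-X$, compares this with the tangential part $P^2X+t(FX)=-X$ of $\phi^2X=-X$ to obtain $P^2X=0$, and then concludes $PX=0$ by skew-adjointness. Your direct evaluation $(\bar\nabla_XP)\xi=-P\nabla_X\xi=P^2X$ is a cleaner shortcut to the same identity: it needs only $\nabla_X\xi=-PX$ and $P\xi=0$, it bypasses Lemma~\ref{lem:parallelP} and the $t,f$ bookkeeping, and it confirms the sign conventions in that lemma.
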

\begin{proof}
For horizontal $X$, use $Y=\xi$ in \eqref{eq:nablaP}.  Since $h(X,\xi)=-FX$, parallelness gives $t(FX)=-X$.  Applying the tangential part of $\phi^2X=-X$ yields
\[
 P^2X+t(FX)=-X.
\]
Substitution gives $P^2X=0$.  Because $P$ is skew-adjoint,
$\|PX\|^2=-g(P^2X,X)=0$, hence $PX=0$.
\end{proof}

\section{The normal morphism \texorpdfstring{$F$}{F} and its parallelism}
We next follow Lemma~11 and Proposition~12 of the K\"ahler PTR paper.  The normal component of the Sasakian identity behaves more rigidly than its tangential component: since
\[
 (\widetilde\nabla_X\phi)Y=g(X,Y)\xi-\eta(Y)X
\]
is tangent, it contributes no additional normal term.  Consequently the basic formula for the normal morphism $F$ has exactly the same formal shape as in the K\"ahler case.  The consequences, however, must again be checked against the Reeb direction.

For a normal vector field $V$, write
\[
 \phi V=tV+fV,
\]
where $tV$ and $fV$ are the tangential and normal components, respectively, and define
\[
 (\bar\nabla_XF)Y=\nabla_X^\perp FY-F\nabla_XY.
\]

\begin{lemma}[Sasakian analogue of the K\"ahler $F$-formula]\label{lem:parallelF}
For all tangent vector fields $X,Y$,
\begin{equation}\label{eq:nablaF}
 (\bar\nabla_XF)Y=f h(X,Y)-h(X,PY).
\end{equation}
Hence $F$ is parallel if and only if
\begin{equation}\label{eq:Fparallel-h}
 h(X,PY)=f h(X,Y)
\end{equation}
for all $X,Y\in\Gamma(TM)$.
\end{lemma}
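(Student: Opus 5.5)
The plan mirrors the proof of Lemma~\ref{lem:parallelP}, but now we read off the normal components. Start from the Sasakian identity \eqref{eq:sas3} with $X,Y$ tangent, so that $(\widetilde\nabla_X\phi)Y=\widetilde\nabla_X(\phi Y)-\phi\widetilde\nabla_XY$. Write $\phi Y=PY+FY$ and expand both terms with the Gauss and Weingarten formulas:
\[
\widetilde\nabla_X(PY+FY)=\nabla_XPY+h(X,PY)-A_{FY}X+\nabla_X^\perp FY,
\]
\[
\phi\widetilde\nabla_XY=\phi\nabla_XY+\phi h(X,Y)=P\nabla_XY+F\nabla_XY+t\,h(X,Y)+f\,h(X,Y).
\]
These expansions use the decompositions \eqref{eq:decomp} for tangent and for normal vectors.

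The key observation, already noted before the statement, is that the right-hand side $g(X,Y)\xi-\eta(Y)X$ of \eqref{eq:sas3} is tangent, because $\xi\in\Gamma(TM)$. Its normal part therefore vanishes. Equating normal components gives
\[
h(X,PY)+\nabla_X^\perp FY-F\nabla_XY-f\,h(X,Y)=0.
\]
By the definition of $\bar\nabla F$, this rearranges directly to \eqref{eq:nablaF}. The tangential components reproduce \eqref{eq:nablaP}, which serves as a consistency check but is not needed here.

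The parallelism criterion then follows at once: $\bar\nabla F=0$ exactly when the right-hand side of \eqref{eq:nablaF} vanishes for all tangent $X,Y$, which is \eqref{eq:Fparallel-h}. There is no real obstacle. The only point requiring care is to verify that no Reeb term leaks into the normal part. This holds because $\xi$ is tangent and $X$ is tangent, so $\eta(Y)X$ and $g(X,Y)\xi$ both lie in $TM$. One should also keep the sign conventions for the Weingarten formula ($\widetilde\nabla_XV=-A_VX+\nabla_X^\perp V$) consistent with the ones used in Lemma~\ref{lem:parallelP}.
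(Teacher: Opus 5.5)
Your proposal is correct and follows essentially the same route as the paper's proof. Both expand $\widetilde\nabla_X(\phi Y)$ via Gauss--Weingarten, observe that the Sasakian term $g(X,Y)\xi-\eta(Y)X$ is tangent because $\xi\in\Gamma(TM)$, and equate normal components to obtain $h(X,PY)+\nabla_X^\perp FY=F\nabla_XY+f\,h(X,Y)$.
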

\begin{proof}
Using $\phi Y=PY+FY$, the Gauss and Weingarten formulas give
\[
 \widetilde\nabla_X(\phi Y)
 =\nabla_XPY+h(X,PY)-A_{FY}X+\nabla_X^\perp FY.
\]
On the other hand,
\[
 \widetilde\nabla_X(\phi Y)
 =\phi\widetilde\nabla_XY+g(X,Y)\xi-\eta(Y)X.
\]
Since
\[
 \widetilde\nabla_XY=\nabla_XY+h(X,Y),
\]
the normal component of the right-hand side is
\[
 F\nabla_XY+f h(X,Y).
\]
Equating normal components therefore yields
\[
 h(X,PY)+\nabla_X^\perp FY=F\nabla_XY+f h(X,Y),
\]
which is exactly \eqref{eq:nablaF}.  The final statement follows from the definition of parallelness.
\end{proof}

\begin{example}[A direct $F$-parallel model]\label{ex:Fparallel-basic}
In the standard Sasakian $\mathbb R^3(-3)$ model, consider the integral
surface \(x_1=\mathrm{const}\), whose tangent bundle is spanned by
\[
 E_1=2\partial_{y_1},\qquad \xi=2\partial_z.
\]
The distribution \(\operatorname{span}\{E_1,\xi\}\) is involutive, so this
surface is well defined.
Take $\Dperp=\operatorname{span}\{E_1\}$ and $\Dtilde=0$.  Then $P=0$ and $FE_1=\phi E_1=F_1$ is normal.  The standard connection satisfies
\[
 h(E_1,\xi)=-F_1,\qquad h(\xi,\xi)=0,
\]
while $fF_1=0$ because $\phi F_1=-E_1$ is tangent.  The remaining terms in \eqref{eq:nablaF} vanish in the adapted frame.  Thus $\bar\nabla F=0$ on this totally real PTR model.  This example shows that $F$-parallelism is not empty in the Sasakian category, even though it is strongly restricted in proper PTR geometry.
\end{example}

Pairing \eqref{eq:Fparallel-h} with a normal vector field gives the same shape-operator characterization as in the K\"ahler paper.

\begin{lemma}[Shape-operator characterization]\label{lem:Fparallel-shape}
The normal morphism \(F\) is parallel if and only if
\begin{equation}\label{eq:Fparallel-shape}
 A_{fV}X=P A_VX
\end{equation}
for every tangent vector field \(X\) and normal vector field \(V\).
Moreover, whenever \(F\) is parallel,
\begin{equation}\label{eq:Fparallel-anticommute}
 PA_V=-A_VP,
 \qquad
 A_{fV}=-A_VP .
\end{equation}
\end{lemma}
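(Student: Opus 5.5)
The plan is to obtain \eqref{eq:Fparallel-shape} by pairing the tensorial criterion \eqref{eq:Fparallel-h} of Lemma~\ref{lem:parallelF} with an arbitrary normal vector field \(V\). Both sides of \eqref{eq:Fparallel-h} are normal, so \eqref{eq:Fparallel-h} holds if and only if
\[
g\bigl(h(X,PY),V\bigr)=g\bigl(fh(X,Y),V\bigr)
\]
for all tangent \(X,Y\) and all normal \(V\). I would then rewrite each side as an inner product with \(Y\) and compare.

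For the left side, I would use \(g(h(X,W),V)=g(A_VX,W)\) with \(W=PY\), together with the skew-symmetry of \(P\) on \(TM\). Skew-symmetry holds because \(g(PX,Y)=g(\phi X,Y)\) for tangent \(Y\), and \(\phi\) is skew. This gives
\[
g\bigl(h(X,PY),V\bigr)=g(A_VX,PY)=-g(PA_VX,Y).
\]

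For the right side, since \(V\) is normal, \(g(fh(X,Y),V)=g(\phi h(X,Y),V)\). Skew-symmetry of \(\phi\) turns this into \(-g(h(X,Y),\phi V)\). Because \(h(X,Y)\) is normal, only the normal part \(fV\) of \(\phi V\) contributes, so
\[
g\bigl(fh(X,Y),V\bigr)=-g\bigl(h(X,Y),fV\bigr)=-g(A_{fV}X,Y).
\]
Equating the two expressions for all \(Y\) gives exactly \(A_{fV}X=PA_VX\). Every step is reversible, so this establishes the equivalence with parallelism of \(F\).

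For the "moreover" part, assume \(F\) is parallel, so \(A_{fV}=PA_V\) as endomorphisms of \(TM\). The shape operator \(A_{fV}\) is self-adjoint, while \(PA_V\) has adjoint \(A_V^{\ast}P^{\ast}=-A_VP\), since \(A_V\) is self-adjoint and \(P\) is skew. Taking adjoints of \(A_{fV}=PA_V\) therefore yields \(A_{fV}=-A_VP\), and comparing with \(A_{fV}=PA_V\) gives \(PA_V=-A_VP\), which is \eqref{eq:Fparallel-anticommute}.

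The only point needing care is the bookkeeping of tangential versus normal projections. One must check that \(g(\phi h,V)\) picks up only \(fh\), and that \(g(h,\phi V)\) picks up only \(fV\). Both follow because \(h\) and \(V\) are normal. The Reeb terms in the Sasakian identity play no role here, since they were already absorbed in Lemma~\ref{lem:parallelF}. So I expect no genuine obstacle, and the argument is identical in form to the K\"ahler one.
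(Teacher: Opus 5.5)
Your proposal is correct and matches the paper's proof: the first part is the same pairing of \eqref{eq:Fparallel-h} with a normal field \(V\), using the skew-symmetry of \(P\) and \(\phi\) together with \(g(h(X,Y),V)=g(A_VX,Y)\). For the second part you take adjoints of \(A_{fV}=PA_V\) using the self-adjointness of \(A_{fV}\), whereas the paper interchanges \(X\) and \(Y\) in \(h(X,PY)=fh(X,Y)\). Both arguments rest on the symmetry of \(h\), so the difference is only cosmetic.
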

\begin{proof}
By Lemma~\ref{lem:parallelF}, \(F\) is parallel if and only if
\[
 h(X,PY)=f h(X,Y).
\]
Pair this identity with a normal vector field \(V\). Since \(\phi\) is
skew-adjoint and tangential and normal vectors are orthogonal,
\[
 g(fh(X,Y),V)
 =-g(h(X,Y),fV)
 =-g(A_{fV}X,Y),
\]
whereas
\[
 g(h(X,PY),V)
 =g(A_VX,PY)
 =-g(PA_VX,Y).
\]
Thus
\[
 g(A_{fV}X,Y)=g(PA_VX,Y)
\]
for every \(Y\), which is equivalent to
\(A_{fV}X=PA_VX\).  The calculation is reversible, so this condition is
also sufficient for \(F\) to be parallel.

Assume now that \(F\) is parallel.  Interchanging \(X\) and \(Y\) in
\(h(X,PY)=fh(X,Y)\) and using the symmetry of \(h\) gives
\[
 h(X,PY)=h(Y,PX).
\]
Pairing with \(V\) yields
\[
 g(A_VX,PY)=g(A_VY,PX).
\]
Using the skew-adjointness of \(P\) and the self-adjointness of \(A_V\),
this is equivalent to \(PA_V=-A_VP\). Combining this with
\(A_{fV}=PA_V\) gives the second identity in
\eqref{eq:Fparallel-anticommute}.
\end{proof}

\begin{example}[Failure of $F$-parallelism on a proper slant block]\label{ex:Fparallel-failure}
Use the two-angle model of Section~\ref{sec:explicit}.  For
\[
 Z_1=F_1,\qquad U_\alpha=-\sin\alpha E_1+\cos\alpha E_2,
\]
we have
\[
 FZ_1=\sin\alpha\,U_\alpha.
\]
Moreover
\[
 \phi U_\alpha=-\sin\alpha F_1+\cos\alpha F_2,
\]
so the normal part is $fU_\alpha=\cos\alpha F_2$.  Taking $Y=\xi$ in \eqref{eq:nablaF} gives
\[
 (\bar\nabla_{Z_1}F)\xi=f h(Z_1,\xi)=-f(FZ_1)
 =-\sin\alpha\cos\alpha\,F_2\neq0.
\]
Thus $F$ is not parallel on this proper PTR example.  The calculation is useful because it tests the full covariant condition rather than merely the algebraic slant identities.
\end{example}

The first consequence of K\"ahler Proposition~12 survives unchanged.

\begin{proposition}\label{prop:Fparallel-one}
If $F$ is parallel, then
\begin{equation}\label{eq:AfV-Dperp}
 A_{fV}Z=0
\end{equation}
for every $Z\in\Gamma(\Dperp)$ and every normal vector field $V$.
\end{proposition}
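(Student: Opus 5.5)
The plan is to derive \eqref{eq:AfV-Dperp} directly from the shape-operator characterization in Lemma~\ref{lem:Fparallel-shape}, together with the vanishing \(P\Dperp=0\) from Lemma~\ref{lem:Pinvariant}. Since \(F\) is parallel, Lemma~\ref{lem:Fparallel-shape} gives the anticommutation identity
\[
A_{fV}=-A_VP
\]
for every normal vector field \(V\). Evaluating on \(Z\in\Gamma(\Dperp)\) and using \(PZ=0\) immediately gives \(A_{fV}Z=-A_V(PZ)=0\). This is the whole argument once the identity \(A_{fV}=-A_VP\) is accepted.

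The only real content lies in that second identity of \eqref{eq:Fparallel-anticommute}. I would therefore re-derive it in place as a safeguard, by pairing with an arbitrary tangent \(Y\). Parallelism of \(F\) gives \(h(X,PY)=fh(X,Y)\) for all tangent \(X,Y\). Take \(X=Z\in\Dperp\) and pair with \(V\). Skew-adjointness of \(\phi\) on the normal component gives
\[
g(A_{fV}Z,Y)=g(h(Z,Y),fV)=-g(fh(Z,Y),V)=-g(h(Z,PY),V).
\]
By symmetry of \(h\), this equals \(-g(h(PY,Z),V)\). On the other hand, the parallelism identity with the roles swapped gives
\[
h(Y,PZ)=fh(Y,Z)=fh(Z,Y)=h(Z,PY).
\]
Since \(PZ=0\), the left side vanishes, so \(h(Z,PY)=0\). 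Hence \(g(A_{fV}Z,Y)=0\) for every tangent \(Y\), which yields \(A_{fV}Z=0\).

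I expect no genuine obstacle here. The one point to check is the sign convention in \(g(fh,V)=-g(h,fV)\), i.e.\ that \(f\) is skew-adjoint on the normal bundle. This follows from the skew-symmetry of \(\phi\), because normal vectors are orthogonal to \(\xi\). The sign is in any case irrelevant, since the final quantity vanishes.
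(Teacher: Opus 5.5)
Your proposal is correct and is essentially the paper's proof: the paper also evaluates the identity $A_{fV}=-A_VP$ from Lemma~\ref{lem:Fparallel-shape} on $Z$ and uses $PZ=0$. (The paper cites \eqref{eq:Fparallel-shape}, but that step actually uses the second identity of \eqref{eq:Fparallel-anticommute}, which is the one you correctly invoke.) Your in-place re-derivation from $h(X,PY)=fh(X,Y)$ is a valid safeguard, and it can be shortened to $g(A_{fV}Z,Y)=-g(fh(Y,Z),V)=-g(h(Y,PZ),V)=0$.
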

\begin{proof}
Since $PZ=0$, equation \eqref{eq:Fparallel-shape} gives
\[
 A_{fV}Z=-A_VPZ=0.
\]
\end{proof}

The second conclusion of K\"ahler Proposition~12 requires a genuine Sasakian correction.  Even under $F$-parallelism, the Reeb component of $A_{\phi Z}U$ is prescribed by the Sasakian identity.

\begin{proposition}[Reeb correction to the K\"ahler shape conclusion]\label{prop:Fparallel-Reebshape}
For every PTR-submanifold, every $Z\in\Gamma(\Dperp)$ and $U\in\Gamma(TM)$,
\begin{equation}\label{eq:AphiZ-Reeb}
 g(A_{\phi Z}U,\xi)=-g(U,Z).
\end{equation}
If, in addition, $F$ is parallel and $\Dperp$ is maximal, then
\begin{equation}\label{eq:AphiZ-split}
 A_{\phi Z}U\in\Gamma(\Dperp\oplus\langle\xi\rangle).
\end{equation}
Thus the K\"ahler statement $A_{JZ}U\in\mathcal H^\perp$ is replaced by \eqref{eq:AphiZ-split}; its extra component is exactly the Reeb term in \eqref{eq:AphiZ-Reeb}.
\end{proposition}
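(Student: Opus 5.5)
The plan is to prove the two assertions separately. The first is a purely algebraic consequence of the Reeb identity \eqref{eq:reebparts}. The second uses the parallelism condition \eqref{eq:Fparallel-h} from Lemma~\ref{lem:parallelF}, tested against the normal vectors $\phi Z$, together with the fact that maximality makes $P$ surjective on $\Dtilde$.

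For \eqref{eq:AphiZ-Reeb}, I start from the relation between the shape operator and the second fundamental form, $g(A_{\phi Z}U,\xi)=g(h(U,\xi),\phi Z)$. By \eqref{eq:reebparts}, $h(U,\xi)=-FU$. I then write $FU=\phi U-PU$. Since $PU$ is tangent and $\phi Z$ is normal, $g(PU,\phi Z)=0$. By \eqref{eq:sas2},
\[
g(\phi U,\phi Z)=g(U,Z)-\eta(U)\eta(Z)=g(U,Z),
\]
because $Z\in\Dperp\subset\ker\eta$. Combining these gives $g(A_{\phi Z}U,\xi)=-g(U,Z)$. No hypothesis beyond the PTR structure is used, which matches the statement.

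For \eqref{eq:AphiZ-split}, it suffices to show that $A_{\phi Z}U$ has zero $\Dtilde$-component, since $TM=\Dperp\oplus\Dtilde\oplus\langle\xi\rangle$. I pair \eqref{eq:Fparallel-h} with $\phi Z$:
\[
g(h(U,PY),\phi Z)=g(fh(U,Y),\phi Z)=-g(h(U,Y),f\phi Z).
\]
Here $f\phi Z$ is the normal part of $\phi^2Z=-Z$, which is tangent, so $f\phi Z=0$. Hence $g(A_{\phi Z}U,PY)=g(h(U,PY),\phi Z)=0$ for all tangent $Y$, that is, $A_{\phi Z}U\perp P(TM)$.

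The remaining step, which I expect to be the only real point, is to show $P(TM)\supset\Dtilde$. By Proposition~\ref{prop:injective}, maximality makes $P|_{\Dtilde}$ injective. By Lemma~\ref{lem:Pinvariant}, $P\Dtilde\subset\Dtilde$, and $\Dtilde$ is finite-dimensional at each point, so $P\Dtilde=\Dtilde$. Therefore the $\Dtilde$-component of $A_{\phi Z}U$ vanishes, which gives \eqref{eq:AphiZ-split}. Its $\xi$-component is exactly the term computed in \eqref{eq:AphiZ-Reeb}, which is the Reeb correction to the K\"ahler conclusion.
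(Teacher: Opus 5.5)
Your proof is correct and follows essentially the same route as the paper. Both arguments get the first identity from $h(U,\xi)=-FU$ and $g(FU,\phi Z)=g(U,Z)$, and the second from $f\phi Z=0$ together with $P\Dtilde=\Dtilde$ under maximality. The only cosmetic difference is in the second part: you pair $h(U,PY)=fh(U,Y)$ with $\phi Z$ directly to get $g(A_{\phi Z}U,PY)=0$, whereas the paper cites the shape-operator identity $A_{fV}=-A_VP$ to get $A_{\phi Z}P=0$ and then uses self-adjointness of $A_{\phi Z}$ --- the same computation packaged differently.
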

\begin{proof}
Using $h(U,\xi)=-FU$,
\[
 g(A_{\phi Z}U,\xi)=g(h(U,\xi),\phi Z)=-g(FU,\phi Z).
\]
Only the $\Dperp$-component of $U$ contributes to the last inner product, and the metric compatibility of $\phi$ gives
\[
 g(FU,\phi Z)=g(U,Z),
\]
proving \eqref{eq:AphiZ-Reeb}.

Assume now that $F$ is parallel and $\Dperp$ is maximal.  Since $f(\phi Z)=0$, Lemma~\ref{lem:Fparallel-shape} gives
\[
 A_{\phi Z}(PX)=0
\]
for every tangent $X$.  Maximality makes $P|_{\Dtilde}$ invertible, hence $A_{\phi Z}$ annihilates $\Dtilde$.  By self-adjointness,
\[
 g(A_{\phi Z}U,Y)=g(U,A_{\phi Z}Y)=0
\]
for every $Y\in\Dtilde$.  Therefore only the $\Dperp$- and Reeb-components remain, giving \eqref{eq:AphiZ-split}.
\end{proof}

\begin{example}[Why the Reeb correction cannot be omitted]\label{ex:Fparallel-Reebshape}
Take $U=Z$ with $Z$ a unit section of $\Dperp$.  Equation \eqref{eq:AphiZ-Reeb} gives
\[
 g(A_{\phi Z}Z,\xi)=-1.
\]
Hence $A_{\phi Z}Z$ can never lie entirely in $\Dperp$.  This is an explicit structural obstruction to copying K\"ahler Proposition~12(2) verbatim into the Sasakian setting.
\end{example}

The normal parallelism statement, on the other hand, survives.

\begin{proposition}\label{prop:normal-parallel-F}
If $F$ is parallel, then
\begin{equation}\label{eq:normal-parallel-sum}
 \phi\Dperp\oplus F\Dtilde
\end{equation}
is parallel in the normal bundle.
\end{proposition}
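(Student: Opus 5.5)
The plan is to identify the subbundle in \eqref{eq:normal-parallel-sum} with the image \(F(TM)\) and then read off its parallelism directly from Lemma~\ref{lem:parallelF}. First I will check that
\[
F(TM)=\phi\Dperp\oplus F\Dtilde .
\]
Decompose \(TM=\Dperp\oplus\Dtilde\oplus\langle\xi\rangle\) as in \eqref{eq:ptrsplit}. For \(Z\in\Dperp\) we have \(PZ=0\), so \(FZ=\phi Z\). For the Reeb direction, \(F\xi=0\) because \(\phi\xi=0\). Hence \(F(TM)=\phi\Dperp+F\Dtilde\). The sum is orthogonal, since for \(X\in\Dtilde\) and \(Z\in\Dperp\),
\[
g(FX,\phi Z)=g(\phi X,\phi Z)=g(X,Z)=0,
\]
using \eqref{eq:sas2}, \(X,Z\perp\xi\), and the fact that \(PX\) is tangent. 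This is exactly the bundle appearing in \eqref{eq:normal}. As there, I take it to be a smooth subbundle of constant rank, which is implicit in the definition of \(\nu\).

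Next, parallelism of a normal subbundle \(\mathcal N\) means \(\nabla^\perp_X\xi_1\in\Gamma(\mathcal N)\) for every tangent \(X\) and every section \(\xi_1\) of \(\mathcal N\). Any local section of \(F(TM)\) can be written as \(FY\) with \(Y\in\Gamma(\Dperp\oplus\Dtilde)\). On \(\phi\Dperp\) the map \(F\) is just \(\phi\), an isometry. On \(\Dtilde\) the map \(F\) has constant rank by the constant-rank assumption above, so a smooth preimage exists locally. If \(F\) is parallel, Lemma~\ref{lem:parallelF} (equivalently the definition \((\bar\nabla_XF)Y=0\)) gives
\[
\nabla^\perp_X(FY)=F\nabla_XY\in\Gamma(F(TM)).
\]
This is the required closure, so \(\phi\Dperp\oplus F\Dtilde\) is parallel. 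Equivalently, since \(\nabla^\perp\) is metric, the complement \(\nu\) is parallel: for \(V\in\Gamma(\nu)\),
\[
g(\nabla^\perp_XV,FY)=-g(V,F\nabla_XY)=0 .
\]

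The Sasakian correction causes no trouble here. The term \(g(X,Y)\xi-\eta(Y)X\) is tangent and was already absorbed in deriving \eqref{eq:nablaF}. Also, \(\nabla_XY\) may acquire a \(\xi\)-component, but this component is annihilated by \(F\). The only point needing care, and the expected main obstacle, is the local representation of an arbitrary section of \(F\Dtilde\) as \(FY\) with \(Y\) smooth. This holds because \(F|_{\Dtilde}\) has constant rank, so I would work on the open set where the rank is locally constant, as in the standing assumption behind \eqref{eq:normal}.
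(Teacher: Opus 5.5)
Your proof is correct and is essentially the paper's argument. The paper also uses $\nabla^\perp_U FX=F\nabla_UX$ for the $F\Dtilde$ summand. For $\phi\Dperp$ it instead differentiates $g(\phi Z,V)=0$ and invokes $A_{fV}Z=0$ (Proposition~\ref{prop:Fparallel-one}), which is just $(\bar\nabla_UF)Z=0$ with $FZ=\phi Z$, the identity you apply directly. Your identification of the sum with $F(TM)$, and your constant-rank remark about lifting sections of $F\Dtilde$, make explicit a point the paper leaves implicit.
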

\begin{proof}
For $X\in\Dtilde$, parallelness gives
\[
 \nabla_U^\perp FX=F\nabla_UX,
\]
which has no component in $\nu$ by the normal splitting \eqref{eq:normal}.  For $Z\in\Dperp$ and $V\in\nu$, differentiation of $g(\phi Z,V)=0$, together with Proposition~\ref{prop:Fparallel-one}, gives
\[
 g(\nabla_U^\perp\phi Z,V)=0.
\]
Thus the normal derivative of either summand in \eqref{eq:normal-parallel-sum} has no $\nu$-component, proving the assertion.
\end{proof}

Finally, K\"ahler Proposition~12(4) acquires the same Reeb condition already encountered in Lemma~\ref{lem:Dtilde-geodesic}.

\begin{proposition}[Ambiguous leaves under parallel \(F\)]
\label{prop:Fparallel-ambiguous}
Assume that \(F\) is parallel.  Then \(\Dtilde\) is integrable and its
leaves are totally geodesic in \(M\) if and only if
\begin{align}
 g(\nabla_X^\perp FY,\phi Z)&=0,
 \label{eq:Fparallel-normal-min}\\
 g(PX,Y)&=0
 \label{eq:Fparallel-Reebcondition}
\end{align}
for all \(X,Y\in\Gamma(\Dtilde)\) and
\(Z\in\Gamma(\Dperp)\).  In particular, if \(\Dperp\) is maximal and
\(\Dtilde\ne0\), these conditions cannot hold simultaneously.

Condition \eqref{eq:Fparallel-Reebcondition} is equivalent to
\(P|_{\Dtilde}=0\).  Maximality is used only in the final non-existence
conclusion, where \(P|_{\Dtilde}=0\) contradicts injectivity unless
\(\Dtilde=0\).
\end{proposition}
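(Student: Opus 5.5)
The plan is to reduce to Lemma~\ref{lem:Dtilde-geodesic}. That lemma already states that \(\Dtilde\) is integrable with totally geodesic leaves if and only if \eqref{eq:Dtilde-geodesic-condition} and \eqref{eq:Dtilde-reeb-geodesic} hold. The second of these is literally \eqref{eq:Fparallel-Reebcondition}, so the task is to show that, under parallel \(F\) and in the presence of \eqref{eq:Fparallel-Reebcondition}, condition \eqref{eq:Dtilde-geodesic-condition} is equivalent to \eqref{eq:Fparallel-normal-min}. Equivalently, we must show that the term \(g(h(X,PY),\phi Z)\) drops out.

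There are two routes to this, and I would use the more robust one.

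\emph{Route via Remark~\ref{rem:Dtilde-Reeb-collapse}.} Under \eqref{eq:Fparallel-Reebcondition}, the remark gives \(P|_{\Dtilde}=0\). For \(Y\in\Dtilde\) we then have \(PY=0\), so \(h(X,PY)=0\) and \eqref{eq:Dtilde-geodesic-condition} reduces exactly to \eqref{eq:Fparallel-normal-min}.

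\emph{Route via parallel \(F\), which I prefer.} This shows the term vanishes independently of the Reeb condition. By Lemma~\ref{lem:Fparallel-shape},
\[
g(h(X,PY),\phi Z)=g(A_{\phi Z}X,PY)=-g(PA_{\phi Z}X,Y)=-g(A_{f\phi Z}X,Y).
\]
Since \(\phi Z\) is normal, \(f\phi Z\) is the normal part of \(\phi^2Z=-Z\), which is tangent. Hence \(f\phi Z=0\) and the term vanishes.

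Either way, the equivalence follows: both conditions together are necessary and sufficient.

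For the final claim, suppose \(\Dperp\) is maximal and \(\Dtilde\ne0\). Proposition~\ref{prop:injective} makes \(P|_{\Dtilde}\) injective. If both conditions held, then \eqref{eq:Fparallel-Reebcondition} with \(Y=PX\) would give \(\|PX\|^2=0\), so \(PX=0\) for every \(X\in\Dtilde\). Injectivity would then force \(\Dtilde=0\), a contradiction. This also verifies the remarks in the statement: \eqref{eq:Fparallel-Reebcondition} is equivalent to \(P|_{\Dtilde}=0\), since \(P\Dtilde\subset\Dtilde\) by Lemma~\ref{lem:Pinvariant}, and maximality enters only at this last step.

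The only delicate point is deciding which mechanism eliminates \(h(X,PY)\) so that the equivalence holds in both directions. I expect the parallel-\(F\) computation with \(f\phi Z=0\) to be the cleanest, because it keeps condition \eqref{eq:Fparallel-normal-min} independent of \eqref{eq:Fparallel-Reebcondition}.
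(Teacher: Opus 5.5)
Your proposal is correct, and your preferred route matches the paper's proof. The paper also reduces to Lemma~\ref{lem:Dtilde-geodesic} and removes $g(h(X,PY),\phi Z)$ using $h(X,PY)=fh(X,Y)$ from parallel $F$ together with $f\phi Z=0$; you make the same cancellation through the shape-operator form $A_{fV}=PA_V$, and you finish the maximal case with $Y=PX$ exactly as the paper does. Your first route is also valid, and it shows in passing that the equivalence itself does not need $F$ parallel: the Reeb condition already forces $P|_{\Dtilde}=0$, which kills $h(X,PY)$.
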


\begin{proof}
By Lemma~\ref{lem:Dtilde-geodesic}, the required
\(\Dperp\)-component vanishes exactly when
\[
 g\bigl(h(X,PY)+\nabla_X^\perp FY,\phi Z\bigr)=0.
\]
If \(F\) is parallel, Lemma~\ref{lem:parallelF} gives
\[
 h(X,PY)=f h(X,Y).
\]
For \(Z\in\Dperp\), \(\phi Z\) is normal and
\[
 f(\phi Z)=0,
\]
because \(\phi^2Z=-Z\) is tangent.  Using the skew-adjointness of the
normal part \(f\),
\[
 g(fh(X,Y),\phi Z)
 =
 -g(h(X,Y),f\phi Z)=0.
\]
Hence the \(\Dperp\)-component condition reduces precisely to
\eqref{eq:Fparallel-normal-min}.  The Reeb component is
\[
 g(\nabla_XY,\xi)=g(PX,Y),
\]
so its vanishing is exactly
\eqref{eq:Fparallel-Reebcondition}.  This proves the equivalence.

If \(\Dperp\) is maximal, \(P|_{\Dtilde}\) is injective.  Taking
\(Y=PX\) in \eqref{eq:Fparallel-Reebcondition} forces
\(\|PX\|^2=0\), so \(\Dtilde=0\).  Thus the two conditions cannot hold
when \(\Dtilde\ne0\).
\end{proof}

\begin{example}[Parallel-$F$ criterion versus the Reeb obstruction]\label{ex:Fparallel-ambiguous}
In the two-angle model, choose $X=Z_1$ and $Y=Z_2$.  Then
\[
 g(PZ_1,Z_2)=-\cos\alpha\neq0.
\]
Thus condition \eqref{eq:Fparallel-Reebcondition} fails before any normal-connection condition is tested.  This makes visible the exact point at which the Sasakian statement departs from K\"ahler Proposition~12(4).
\end{example}

The blockwise integrability, foliation and canonical-morphism questions
have also been studied for quasi-hemi-slant and quasi-bi-slant
submanifolds; see \cite{PrasadVermaKumar2020,PrasadVerma2021}.  Those
results provide adjacent Sasakian context, while the organization here
continues to follow the PTR program summarized in
Table~\ref{tab:kahler-sasakian-program}.

Propositions~\ref{prop:Fparallel-Reebshape},
\ref{prop:normal-parallel-F}, and \ref{prop:Fparallel-ambiguous} record,
respectively, the Reeb-shape consequence, the structural normal-subbundle
consequence, and the ambiguous-leaf criterion under parallel \(F\).

\section{PTR-submanifolds in Sasakian space forms}\label{sec:spaceform}

We now follow Section~5 of the K\"ahler PTR paper and pass from the intrinsic distribution theory to ambient curvature.  The correct contact analogue of a complex space form is a Sasakian space form.  The calculation below is intentionally kept parallel to the K\"ahler mixed-curvature computation; the difference is that the Sasakian curvature tensor contains the distinguished baseline curvature of the Reeb geometry.

A Sasakian manifold $\Mbar(c)$ of constant $\phi$-sectional curvature $c$ is called a \emph{Sasakian space form}.  With the curvature convention used here, its curvature tensor is
\begin{align}
\widetilde R(X,Y)Z={}&\frac{c+3}{4}
 \{g(Y,Z)X-g(X,Z)Y\}\nonumber\\
&+\frac{c-1}{4}\{
 g(X,\phi Z)\phi Y-g(Y,\phi Z)\phi X
 +2g(X,\phi Y)\phi Z\}\nonumber\\
&+\frac{c-1}{4}\{
 \eta(X)\eta(Z)Y-\eta(Y)\eta(Z)X
 +g(X,Z)\eta(Y)\xi-g(Y,Z)\eta(X)\xi\}.
\label{eq:sas-space-form}
\end{align}
This is the standard Sasakian space-form formula; see, for example, \cite{Blair2010}.  In the horizontal directions $\eta=0$, the first two lines are the exact analogue of the real and complex parts of the curvature tensor of a K\"ahler complex space form.

The first identity is the direct Sasakian counterpart of the mixed-curvature formula used in the K\"ahler PTR paper.

\begin{lemma}[Mixed curvature identity]\label{lem:mixed-curvature}
Let $M$ be a PTR-submanifold of a Sasakian space form $\Mbar(c)$.  For
\[
 X\in\Gamma(\Dtilde),\qquad Z\in\Gamma(\Dperp),
\]
one has
\begin{equation}\label{eq:mixed-curvature}
 g\bigl(\widetilde R(X,PX)Z,\phi Z\bigr)
 =-\frac{c-1}{2}\,\|PX\|^2\,\|Z\|^2 .
\end{equation}
\end{lemma}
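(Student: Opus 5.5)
The identity \eqref{eq:mixed-curvature} is purely algebraic and involves no Gauss or Codazzi equation. The plan is to substitute $X$, $Y=PX$ and the fixed $Z\in\Gamma(\Dperp)$ directly into the space-form expression \eqref{eq:sas-space-form}, pair the result with $\phi Z$, and evaluate each resulting inner product using the PTR decomposition \eqref{eq:ptrsplit} and Lemma~\ref{lem:Pinvariant}.

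First I will dispose of the Reeb line of \eqref{eq:sas-space-form}. The vectors $X$, $PX$ and $Z$ all lie in $TM\cap\ker\eta$; for $PX$ this is because $PX\in\Dtilde$ by Lemma~\ref{lem:Pinvariant}. Hence every term of the third line carries a factor $\eta(X)$, $\eta(PX)$ or $\eta(Z)$, and all of them vanish.

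Next I will treat the first line, which after pairing with $\phi Z$ is
\[
\frac{c+3}{4}\bigl\{g(PX,Z)\,g(X,\phi Z)-g(X,Z)\,g(PX,\phi Z)\bigr\}.
\]
This vanishes because $g(PX,Z)=g(X,Z)=0$: both $X$ and $PX$ lie in $\Dtilde$, which is orthogonal to $\Dperp$. It also vanishes for a second reason, namely that $\phi Z$ is normal, so it is orthogonal to the tangent vectors $X$ and $PX$.

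The main step, though still routine, is the second line. After pairing with $\phi Z$ it becomes
\[
\frac{c-1}{4}\bigl\{g(X,\phi Z)\,g(\phi PX,\phi Z)-g(PX,\phi Z)\,g(\phi X,\phi Z)+2\,g(X,\phi PX)\,\|\phi Z\|^2\bigr\}.
\]
The first two products vanish, again because $\phi Z$ is normal while $X$ and $PX$ are tangent. For the last product I will use the skew-symmetry of $\phi$ together with the tangential/normal splitting $\phi X=PX+FX$:
\[
g(X,\phi PX)=-g(\phi X,PX)=-g(PX+FX,PX)=-\|PX\|^2,
\]
since $FX$ is normal and $PX$ is tangent. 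I will also use \eqref{eq:sas2} with $\eta(Z)=0$, which gives $\|\phi Z\|^2=\|Z\|^2$.

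Putting these together, the second line contributes $\frac{c-1}{4}\cdot 2\cdot(-\|PX\|^2)\,\|Z\|^2$, and this is exactly the right-hand side of \eqref{eq:mixed-curvature}. The only place requiring care is the sign convention in \eqref{eq:sas-space-form} for the term $2g(X,\phi Y)\phi Z$, and I will track that sign explicitly.
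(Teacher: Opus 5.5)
Your proposal is correct and is essentially the paper's own argument: substitute $Y=PX$ into \eqref{eq:sas-space-form}, then discard the $\frac{c+3}{4}$-block, the $\eta$-block and the first two $\frac{c-1}{4}$-terms using $\eta(X)=\eta(PX)=\eta(Z)=0$, the orthogonality $\Dtilde\perp\Dperp$, and the normality of $\phi Z$. The surviving term is evaluated exactly as in the paper via $g(X,\phi PX)=-\|PX\|^2$ and $\|\phi Z\|^2=\|Z\|^2$, giving $-\frac{c-1}{2}\|PX\|^2\|Z\|^2$.
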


\begin{proof}
Since $X,PX\in\Dtilde\subset\ker\eta$ and $Z\in\Dperp\subset\ker\eta$, all four vectors are horizontal.  Moreover,
\[
 g(X,Z)=g(PX,Z)=0
\]
by the orthogonal PTR decomposition, while $\phi Z$ is normal.  Hence
\[
 g(X,\phi Z)=g(PX,\phi Z)=0.
\]
Substituting $Y=PX$ into \eqref{eq:sas-space-form}, the terms involving the coefficient $(c+3)/4$ vanish after pairing with $\phi Z$, and the first two terms in the $(c-1)/4$-block also vanish.  Thus
\[
 g\bigl(\widetilde R(X,PX)Z,\phi Z\bigr)
 =\frac{c-1}{2}\,g(X,\phi PX)\,\|\phi Z\|^2.
\]
Because $Z\perp\xi$,
\[
 \|\phi Z\|^2=\|Z\|^2.
\]
Also,
\[
 g(X,\phi PX)=-g(\phi X,PX).
\]
Using $\phi X=PX+FX$ and the orthogonality of tangent and normal vectors gives
\[
 g(X,\phi PX)=-\|PX\|^2.
\]
Substitution yields \eqref{eq:mixed-curvature}.
\end{proof}

Lemma~\ref{lem:mixed-curvature} is the curvature input used in the
subsequent reduction statements.

\begin{example}[The two-angle model in $\mathbb R^{11}(-3)$]\label{ex:mixed-curvature}
Return to the explicit maximal PTR immersion of Section~\ref{sec:explicit}.  The standard Sasakian Euclidean model has $\phi$-sectional curvature $c=-3$.  Choose
\[
 X=Z_1,\qquad Z=Z_5,
\]
where $Z_1,Z_5$ are unit and
\[
 PZ_1=-\cos\alpha\,Z_2.
\]
Then
\[
 \|PZ_1\|^2=\cos^2\alpha.
\]
Equation \eqref{eq:mixed-curvature} gives
\[
 g\bigl(\widetilde R(Z_1,PZ_1)Z_5,\phi Z_5\bigr)
 =2\cos^2\alpha.
\]
Thus the mixed curvature component is nonzero for every proper angle
$0<\alpha<\pi/2$.  This example shows explicitly that the Sasakian Euclidean model does not play the role of the zero-curvature complex Euclidean model in the K\"ahler calculation.
\end{example}

The numerical shift from $c$ to $c-1$ is the first essential difference from the complex-space-form calculation.

\begin{corollary}[Curvature reduction criterion]\label{cor:c-one}
Assume that at a point $p\in M$ there exist
\[
 0\neq X\in\Dtilde_p,\qquad 0\neq Z\in\Dperp_p,
\]
with $PX\neq0$.  Then
\[
 g\bigl(\widetilde R(X,PX)Z,\phi Z\bigr)=0
 \quad\Longleftrightarrow\quad c=1.
\]
In particular, on a proper PTR region, any geometric hypothesis forcing this mixed curvature component to vanish forces the ambient Sasakian space form to have $\phi$-sectional curvature $1$.
\end{corollary}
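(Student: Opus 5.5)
The corollary follows directly from Lemma~\ref{lem:mixed-curvature}; the only task is to check that the prefactor multiplying \(c-1\) does not vanish under the stated hypotheses.

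I would apply identity \eqref{eq:mixed-curvature} at the point \(p\) to the given vectors \(X\in\Dtilde_p\) and \(Z\in\Dperp_p\). The lemma is stated for sections, but both sides are tensorial in \(X\) and \(Z\). Hence the identity holds pointwise after extending \(X\) and \(Z\) to local sections of \(\Dtilde\) and \(\Dperp\). This is possible because both are smooth constant-rank distributions. The identity gives
\[
 g\bigl(\widetilde R(X,PX)Z,\phi Z\bigr)=-\frac{c-1}{2}\,\|PX\|^2\,\|Z\|^2 .
\]

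Next, the hypotheses \(PX\neq0\) and \(Z\neq0\) give \(\|PX\|^2\|Z\|^2>0\). So the right-hand side vanishes if and only if \(c-1=0\), which is the claimed equivalence. Both directions are immediate from this factorization: \(c=1\) kills the right-hand side, and conversely a vanishing left-hand side with a nonzero prefactor forces \(c=1\).

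For the ``in particular'' statement, I would check that a proper PTR region supplies such vectors at every point. By Definition~\ref{def:ptr}, \(\Dperp\) is nonzero, so some \(Z\neq0\) exists. Properness excludes \(\Dtilde\) being totally real, that is, \(P|_{\Dtilde}\equiv0\). So on the region there is a point, and by continuity an open set, where some \(X\in\Dtilde\) has \(PX\neq0\). At any such point, a geometric hypothesis forcing the mixed component to vanish for all admissible \(X,Z\) yields \(c=1\) by the first part. Since \(c\) is a global constant of \(\Mbar(c)\), one point suffices.

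I expect no real obstacle. The only points needing care are the pointwise application of a lemma stated for sections, which tensoriality settles, and extracting a point with \(PX\neq0\) from the negation of total reality in the definition of properness.
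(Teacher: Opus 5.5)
Your proposal is correct and matches the paper's proof: the paper just notes that $\|PX\|^2\|Z\|^2>0$ under the hypotheses and reads the equivalence off the identity in Lemma~\ref{lem:mixed-curvature}. Your extra remarks are sound elaborations of points the paper leaves implicit: the pointwise use of the lemma by tensoriality, and getting a point with $PX\neq0$ from properness, where one point suffices because $c$ is constant. One small wording fix: you announce that a proper region supplies admissible vectors ``at every point'' but only prove it at some point, which is all the argument needs.
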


\begin{proof}
Under the stated nondegeneracy assumptions,
\[
 \|PX\|^2\|Z\|^2>0.
\]
The conclusion follows immediately from \eqref{eq:mixed-curvature}.
\end{proof}

\begin{example}[The distinguished value $c=1$]\label{ex:c-one}
For a Sasakian space form with $c=1$, formula \eqref{eq:sas-space-form} loses both $(c-1)/4$-blocks.  Consequently
\[
 g\bigl(\widetilde R(X,PX)Z,\phi Z\bigr)=0
\]
for every horizontal PTR triple $(X,PX,Z)$.  Thus $c=1$, rather than $c=0$, is the curvature value naturally singled out by the Sasakian analogue of the K\"ahler mixed-curvature test.
\end{example}

The K\"ahler paper next combines the corresponding curvature identity with
integrability, totally geodesic leaves and parallel \(F\) to obtain a
flatness conclusion in non-positive complex space forms.  In the Sasakian
setting, however, the direct leaf/product hypothesis is already restricted
by Theorem~\ref{thm:ptrproduct-shape}: a totally geodesic ambiguous factor
forces \(P|_{\Dtilde}=0\).  Thus the mixed curvature identity remains
meaningful, but the K\"ahler Codazzi argument cannot be imported unchanged
on a proper PTR region.

\begin{proposition}[Sasakian replacement for the curvature conclusion]\label{prop:curvature-replacement}
Let $M$ be a proper PTR-submanifold of a Sasakian space form $\Mbar(c)$.  Suppose that, on an open set, a geometric condition arising from the Codazzi equation, parallel canonical morphisms, or mixed total geodesicity yields
\begin{equation}\label{eq:mixed-zero}
 g\bigl(\widetilde R(X,PX)Z,\phi Z\bigr)=0
\end{equation}
for every $X\in\Dtilde$ and $Z\in\Dperp$.  If both distributions are nonzero, then
\[
 c=1.
\]
\end{proposition}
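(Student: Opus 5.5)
The plan is to reduce the statement directly to Corollary~\ref{cor:c-one}. Everything depends on producing, at a single point of the open set, a vector \(X\in\Dtilde\) with \(PX\neq0\) and a vector \(0\neq Z\in\Dperp\). Once such a pair exists, hypothesis \eqref{eq:mixed-zero} makes the left-hand side of the equivalence in Corollary~\ref{cor:c-one} true, and the equivalence gives \(c=1\).

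Nonvanishing of \(\Dperp\) is immediate. It is part of Definition~\ref{def:ptr}, since \(\Dperp\) is a nonzero constant-rank distribution, and it is also assumed explicitly. So at any point \(p\) of the open set we choose a unit \(Z\in\Dperp_p\).

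The only real step is to produce \(X\in\Dtilde_p\) with \(PX\neq0\), and this is where properness enters. Suppose instead that \(P|_{\Dtilde}=0\) at every point of the open set, or even on an open subset of it. Then \(\phi X=FX\) is normal for all \(X\in\Dtilde\), so \(\Dtilde\) is totally real there. This contradicts Definition~\ref{def:proper}, which says the geometric type of \(\Dtilde\) is constant on the region and is not totally real. Hence some point \(p\) carries an \(X\in\Dtilde_p\) with \(PX\neq0\). Since \(\Dtilde\ne0\), this point exists. By continuity, \(PX\neq0\) then holds on a neighbourhood, though one point is enough.

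The main subtlety is making sure properness, which is defined regionwise, really excludes \(P|_{\Dtilde}\equiv0\) pointwise. I would handle this as follows: the locus where \(P|_{\Dtilde}\neq0\) is open, and if it were empty, \(\Dtilde\) would be totally real on the whole region. With the pair \((X,Z)\) in hand, \eqref{eq:mixed-curvature} gives
\[
0=-\tfrac{c-1}{2}\|PX\|^2\|Z\|^2 .
\]
Since \(\|PX\|^2\|Z\|^2>0\), this forces \(c=1\), as Corollary~\ref{cor:c-one} states.
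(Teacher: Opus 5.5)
Your proposal is correct and is essentially the paper's proof. Properness rules out $P|_{\Dtilde}\equiv 0$ on the open set, so you pick $X\in\Dtilde$ with $PX\neq0$ and a nonzero $Z\in\Dperp$, and Corollary~\ref{cor:c-one} (the factor $-\tfrac{c-1}{2}\|PX\|^2\|Z\|^2$ in \eqref{eq:mixed-curvature}) then forces $c=1$; your remark that the locus where $P|_{\Dtilde}\neq0$ is open only spells out what the paper states in one line.
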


\begin{proof}
Because the PTR structure is proper, $P|_{\Dtilde}$ is not identically zero on the open set.  Choose $X$ with $PX\neq0$ and a nonzero $Z\in\Dperp$.  Applying Corollary~\ref{cor:c-one} to \eqref{eq:mixed-zero} gives $c=1$.
\end{proof}

\begin{example}[Why the conclusion is sharp]\label{ex:curvature-replacement}
For $c=1$, equation \eqref{eq:mixed-zero} holds identically by Example~\ref{ex:c-one}, so the value in Proposition~\ref{prop:curvature-replacement} cannot be improved.  For the standard model $c=-3$, Example~\ref{ex:mixed-curvature} gives
\[
 g\bigl(\widetilde R(Z_1,PZ_1)Z_5,\phi Z_5\bigr)
 =2\cos^2\alpha>0,
\]
so the hypothesis \eqref{eq:mixed-zero} genuinely fails.
\end{example}

\begin{remark}
In the K\"ahler PTR paper, the corresponding complex-space-form argument singles out $c=0$ under its stated hypotheses.  The Sasakian mixed-curvature identity instead singles out $c=1$.  This is not a change of terminology or of the PTR framework; it is forced by the $(c-1)/4$ contact-curvature term in \eqref{eq:sas-space-form}.
\end{remark}

\section{Totally umbilical PTR-submanifolds}\label{sec:umbilical}

Section~6 of the K\"ahler PTR paper studies the position of the mean
curvature vector of a totally umbilical PTR-submanifold.  In the Sasakian
setting tangent to the Reeb field, the same starting assumption is much
more restrictive.  The following elementary identity explains the
difference.

\begin{proposition}\label{prop:umbilical-geodesic}
Let \(M\) be any totally umbilical submanifold of a Sasakian manifold
\(\Mbar\) such that \(\xi\in\Gamma(TM)\).  Then \(M\) is totally geodesic.
\end{proposition}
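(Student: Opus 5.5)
The plan is to use the two Reeb identities in \eqref{eq:reebparts}, namely \(\nabla_X\xi=-PX\) and \(h(X,\xi)=-FX\), together with the umbilicity assumption \(h(X,Y)=g(X,Y)H\).

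\emph{Step 1: the mean curvature vanishes.} I would put \(X=Y=\xi\). Since \(\phi\xi=0\), we have \(F\xi=0\), so \eqref{eq:reebparts} gives \(h(\xi,\xi)=0\). This identity holds for every submanifold tangent to \(\xi\). It is simply the normal part of \(\widetilde\nabla_\xi\xi=-\phi\xi=0\). Umbilicity gives \(h(\xi,\xi)=g(\xi,\xi)H=H\), because \(\xi\) is a unit vector. Therefore \(H=0\).

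\emph{Step 2: conclude \(h=0\).} With \(H=0\), umbilicity reads \(h(X,Y)=g(X,Y)H=0\) for all tangent \(X,Y\). So \(M\) is totally geodesic. As a consistency check I would also note the mixed identity: for horizontal \(X\), \(h(X,\xi)=g(X,\xi)H=0\). By \eqref{eq:reebparts} this equals \(-FX\), so \(FX=0\) and \(M\) is in fact invariant. This is a side consequence and is not needed for the statement.

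\emph{Expected obstacle.} The only step needing care is the justification of \(h(\xi,\xi)=0\), i.e.\ that the normal component of \(\widetilde\nabla_\xi\xi\) vanishes. This follows directly from \eqref{eq:reeb} with \(\phi\xi=0\). Everything else is immediate.
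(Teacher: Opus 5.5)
Your proposal is correct and follows the paper's proof essentially verbatim: the paper also takes $X=\xi$ in $h(X,\xi)=-FX$ to get $h(\xi,\xi)=0$, deduces $H=h(\xi,\xi)=0$ from umbilicity and $g(\xi,\xi)=1$, and concludes $h=0$. Your side remark that $FX=0$ for all tangent $X$ (so $M$ is invariant) is also correct, and it is the same observation the paper uses in Theorem~\ref{thm:no-umbilical-ptr} to rule out a nonzero totally real distribution.
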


\begin{proof}
For a submanifold tangent to \(\xi\), the Sasakian identity
\(\widetilde\nabla_X\xi=-\phi X\), together with the Gauss formula, gives
\begin{equation}\label{eq:hXxi}
 h(X,\xi)=-FX
\end{equation}
for every \(X\in\Gamma(TM)\).  Setting \(X=\xi\) and using
\(\phi\xi=0\) yields
\[
 h(\xi,\xi)=0.
\]
If \(M\) is totally umbilical, then
\[
 h(U,V)=g(U,V)H
\]
for its mean curvature vector \(H\).  Since \(g(\xi,\xi)=1\),
\[
 H=h(\xi,\xi)=0.
\]
Therefore \(h=0\), and \(M\) is totally geodesic.
\end{proof}

\begin{example}[The Reeb direction itself]\label{ex:umbilical-reeb}
An integral curve of the Reeb field satisfies
\(\widetilde\nabla_\xi\xi=-\phi\xi=0\).  Hence it is a geodesic and,
being one-dimensional, is totally umbilical.  This boundary case
illustrates Proposition~\ref{prop:umbilical-geodesic}; it does not carry a
nonzero totally real PTR distribution.
\end{example}

The PTR assumption now makes the conclusion stronger.

\begin{theorem}\label{thm:no-umbilical-ptr}
Let \(M\) be a PTR-submanifold tangent to \(\xi\) whose chosen totally
real distribution \(\Dperp\) is nonzero.  Then \(M\) cannot be totally
umbilical.  In particular, no proper PTR-submanifold with
\(\Dperp\neq0\) is totally umbilical.
\end{theorem}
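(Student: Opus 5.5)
The plan is to argue by contradiction, using Proposition~\ref{prop:umbilical-geodesic} together with the Reeb identity \eqref{eq:hXxi}.

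Suppose \(M\) is totally umbilical. Since \(\xi\in\Gamma(TM)\), Proposition~\ref{prop:umbilical-geodesic} applies and shows that \(H=h(\xi,\xi)=0\), so \(h\equiv0\).

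Next choose a point and a nonzero local section \(Z\in\Gamma(\Dperp)\). This is possible because \(\Dperp\) is nonzero of constant rank. Apply \eqref{eq:hXxi} with \(X=Z\):
\[
0=h(Z,\xi)=-FZ.
\]
By total reality, \(PZ=0\) (Lemma~\ref{lem:Pinvariant}), so \(\phi Z=FZ\). Hence \(\phi Z=0\).

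On the other hand, \(Z\in\ker\eta\), so \eqref{eq:sas2} gives
\[
\|\phi Z\|^2=\|Z\|^2-\eta(Z)^2=\|Z\|^2>0.
\]
This contradicts \(\phi Z=0\). Therefore \(M\) cannot be totally umbilical.

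For the final assertion, a proper PTR structure with \(\Dperp\neq0\) is a special case of the hypothesis, so the first part applies to it directly.

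The only delicate step is the use of \eqref{eq:hXxi} on \(\Dperp\), together with the check that \(\phi Z\) is entirely normal and nonzero. Both facts follow immediately from the definition of \(\Dperp\) and from \eqref{eq:sas2}, so I expect no real obstacle. The argument in fact shows more: \(h(Z,\xi)=-\phi Z\neq0\) for every nonzero \(Z\in\Dperp\), so \(M\) is not even totally geodesic.
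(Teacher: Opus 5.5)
Your proof is correct and follows the paper's argument essentially verbatim: Proposition~\ref{prop:umbilical-geodesic} gives \(h=0\), and then \(h(Z,\xi)=-FZ=-\phi Z\) for a nonzero \(Z\in\Dperp\) contradicts \(\|\phi Z\|=\|Z\|>0\). Your closing remark that \(M\) is not even totally geodesic is valid and is a slightly cleaner way to phrase the same contradiction.
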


\begin{proof}
Assume that \(M\) is totally umbilical.  By
Proposition~\ref{prop:umbilical-geodesic}, \(h=0\).  Choose
\(0\neq Z\in\Dperp\).  Equation~\eqref{eq:hXxi} gives
\[
 0=h(Z,\xi)=-FZ.
\]
Since \(\Dperp\) is totally real, \(PZ=0\) and therefore
\(FZ=\phi Z\).  Moreover \(Z\perp\xi\), so
\[
 \|\phi Z\|^2=\|Z\|^2>0,
\]
contradicting \(FZ=0\).  Hence such a totally umbilical PTR-submanifold
does not exist.
\end{proof}

\begin{remark}\label{rem:umbilical-kahler}
This is the Sasakian counterpart of the totally umbilical stage of the
K\"ahler PTR program, but its form is necessarily different.  In the
K\"ahler case the mean curvature vector can occupy nontrivial normal
components and leads to the alternatives studied in Section~6 of
\cite{PoyrazSahinYerlikaya2025}.  Here the tangent Reeb field forces
\(H=0\) before those alternatives arise.  Thus the K\"ahler propositions
on the position of \(H\) should not be copied into the Sasakian setting.
\end{remark}

\section{PTR-products in Sasakian space forms}

The final section of the K\"ahler PTR paper studies PTR-products in
complex space forms.  We keep the same place in the research program, but
the direct two-factor PTR-product already encounters the Reeb obstruction
before a curvature inequality can be formed.

This section records explicitly the failure of the direct K\"ahler
leaf/product transplantation.  A product factor tangent to \(\Dtilde\)
would have to be integrable and totally geodesic.  By
Remark~\ref{rem:Dtilde-Reeb-collapse}, the Reeb component then forces
\(P|_{\Dtilde}=0\), which is incompatible with proper PTR geometry.

\begin{corollary}\label{cor:no-proper-product-final}
There is no proper direct two-factor PTR-product in a Sasakian space form.
\end{corollary}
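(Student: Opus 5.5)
The plan is to reduce Corollary~\ref{cor:no-proper-product-final} to Theorem~\ref{thm:ptrproduct-shape}, which holds in any Sasakian manifold and therefore in particular in a Sasakian space form $\Mbar(c)$. The space-form curvature \eqref{eq:sas-space-form} will not be needed. The obstruction comes from the Reeb component of the second fundamental form of the ambiguous leaves, not from curvature.

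I would argue by contradiction. Suppose $M\subset\Mbar(c)$ is a proper PTR-submanifold tangent to $\xi$, and suppose it is a direct two-factor PTR-product in the Kähler sense. That is, $M$ is locally a Riemannian product of a leaf of $\Dperp$ (together with the Reeb line, however it is placed) and a leaf of $\Dtilde$.

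The first step is to extract what the product structure forces on $\Dtilde$. It must be integrable and its leaves must be totally geodesic in $M$, so that $\nabla_XY\in\Gamma(\Dtilde)$ for all $X,Y\in\Gamma(\Dtilde)$. By Lemma~\ref{lem:Dtilde-geodesic}, this gives condition \eqref{eq:Dtilde-reeb-geodesic}:
\[
g(PX,Y)=g(\nabla_XY,\xi)=0\quad\text{for all }X,Y\in\Gamma(\Dtilde).
\]

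The second step is to invoke Lemma~\ref{lem:Pinvariant}, which says $PX\in\Dtilde$. Choosing $Y=PX$ gives $\|PX\|^2=0$, so $P|_{\Dtilde}=0$, exactly as in Remark~\ref{rem:Dtilde-Reeb-collapse}. Then $\phi\Dtilde\subset T^\perp M$, meaning $\Dtilde$ is totally real. This contradicts properness in the sense of Definition~\ref{def:proper}.

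The step needing the most care is the first one: making precise that a direct product factor tangent to $\Dtilde$ is totally geodesic, so that the Reeb component $g(\nabla_XY,\xi)$ really must vanish. I would settle this by treating the Kähler PTR-product definition as requiring both factors to be totally geodesic foliations, which is the de Rham-type product condition. I would also note that adjoining $\xi$ to the $\Dperp$-factor does not change the argument, because the Reeb component of $\nabla_XY$ must still vanish for the $\Dtilde$-factor. If $\Dtilde=0$, the structure is not proper in the first place, so that case needs no separate treatment.
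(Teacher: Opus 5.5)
Your proposal is correct and is essentially the paper's proof: the corollary is Theorem~\ref{thm:ptrproduct-shape} specialized to $\Mbar(c)$, and you unpack that theorem's argument (totally geodesic $\Dtilde$-leaves, the Reeb condition of Lemma~\ref{lem:Dtilde-geodesic}, the choice $Y=PX$ via Lemma~\ref{lem:Pinvariant}, and the contradiction with properness), with no use of the space-form curvature. Your convention that the ambiguous factor is a leaf of $\Dtilde$ itself, with $\xi$ attached to the other factor, is exactly the paper's and is genuinely needed: if $\xi$ were adjoined to the ambiguous factor the obstruction disappears (the model of Section~\ref{sec:explicit} is locally a Riemannian product of the $Z_5$-lines and the leaves of $\Dtilde\oplus\langle\xi\rangle$), so ``however it is placed'' should not be read as allowing that.
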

\begin{proof}
This is Theorem~\ref{thm:ptrproduct-shape} specialized to a Sasakian
space form.
\end{proof}

\begin{remark}\label{rem:product-kahler}
Corollary~\ref{cor:no-proper-product-final} explains why the final
complex-space-form inequality of the K\"ahler PTR paper has no direct
proper-Sasakian transcription under the same two-factor product
definition.  No replacement inequality is asserted here: obtaining one
would require a different, independently justified Sasakian product
framework, which lies outside the present paper.
\end{remark}

\section*{Conclusion}
We have developed the PTR construction for Sasakian submanifolds tangent
to the Reeb vector field while retaining the organization of the K\"ahler
PTR program.  The decomposition
\[
 TM=\Dperp\oplus\Dtilde\oplus\langle\xi\rangle
\]
separates the Reeb direction and leaves the ambiguous distribution
unrestricted in the definition.  Anti-invariant, contact CR, hemi-slant and pointwise hemi-slant
submanifolds occur as special cases by Proposition~\ref{prop:inclusions},
while the generic-versus-PTR comparison explains the construction from the
totally real side.  Properness depends on the chosen PTR decomposition, as
recorded in Remark~\ref{rem:choice-dependence}.

The Sasakian identities determine precisely where the subsequent theory
departs from its K\"ahler counterpart.  They add a Reeb component to the
integrability and leaf geometry of \(\Dtilde\), strongly restrict
parallelness of the canonical morphisms, and replace the complex-space-form
mixed-curvature coefficient by the characteristic \(c-1\) factor.  The
same Reeb identity also collapses the totally umbilical stage:
a submanifold tangent to \(\xi\) that is totally umbilical is already
totally geodesic, and a nonzero totally real PTR distribution is then
impossible.

Finally, the direct two-factor PTR-product of the K\"ahler theory is
incompatible with proper Sasakian PTR geometry because its ambiguous
factor would have to be totally geodesic, forcing
\(P|_{\Dtilde}=0\).  Thus the last two parts of the original PTR program
are preserved in position and purpose, but their Sasakian conclusions are
nonexistence statements forced by the Reeb field rather than copies of the K\"ahler alternatives and inequalities.
The complete stage-by-stage relationship is summarized in
Table~\ref{tab:kahler-sasakian-program}.

The relation of the last two stages to their K\"ahler counterparts is
spelled out in Remarks~\ref{rem:umbilical-kahler} and
\ref{rem:product-kahler}.

\end{document}